\journalname{}
\begin{document}

\title{Long time existence for the bosonic membrane in the light cone gauge
}


\author{Weiping Yan $^{a}$, Binlin Zhang $^{b}$
}


\institute{
\footnotesize $^a$ School of Mathematical Sciences, Xiamen University, Xiamen, 361005, China.\\
\email{yanwp@xmu.edu.cn}\\
\footnotesize $^b$ College of Mathematics and System Science, Shandong University of Science and Technology,
Qingdao, 266590, China\\
\email{zhangbinlin2012@163.com}}
\date{Received: date / Accepted: date}

\maketitle

\begin{abstract}
This paper aims to establish the well-posedness on time interval $[0,\varepsilon^{-\frac{1}{2}}T]$ of the classical initial problem for the bosonic membrane in the light cone gauge. Here $\varepsilon$ is the small parameter which measures the nonlinear effects.
The bosonic membrane is timelike submanifolds with vanishing mean curvature. Since the initial Riemannian metric may be degenerate,
the corresponding equation can be reduced to a quasi-linear degenerate hyperbolic system of second order with an area preserving constraint via a Hamiltonian reduction.

\keywords{Wave equations \and Smooth solutions  }
\end{abstract}

\section{Introduction and Main Results}
\label{intro}
Let $M$ be a compact $2$-dimensional manifold, $\textbf{R}^d$ be a $d$-dimensional Minkowski space and $\Sigma=\textbf{R}\times M$ be a $3$-dimensional submanifolds of Minkowski space $\textbf{R}^d$. $u=(u^{\mu})$ is an embedding from $\Sigma$ to $\textbf{R}^d$.
The critical points of the Nambu-Goto action
\begin{eqnarray}\label{E1-5}
\mathcal{S}=-\int_{\Sigma}\mu_g
\end{eqnarray}
give rise to submanifolds $\Sigma\subset\textbf{R}^d$ with vanishing mean curvature, where $\mu_g$ is a volume form is induced by a metric $g$ on $\Sigma$.
The Euler-Lagrange equation for functional $\mathcal{S}$ is
\begin{eqnarray*}
\sqrt{|g|}\square_gu^{\mu}=0,~~\mu=0,\ldots,d-1.
\end{eqnarray*}
If $\sqrt{|g|}\neq0$, it is equivalent to
\begin{eqnarray*}
(\delta_{\mu\nu}-g^{CD}\partial_Cu_{\mu}\partial_Du_{\nu})g^{AB}\partial_{A}\partial_Bu^{\nu}=0,
\end{eqnarray*}
where $\mu,\nu,\ldots=0,\ldots,d-1$ and $A,B,C,D,\ldots=0,1,2$ refer to coordinates on Minkowski space and $\Sigma$, respectively. In local coordinates, $g_{AB}$ is a Lorentzian metric  on $\Sigma$ with $\partial_t$ a timelike direction, which is expressed by $g_{AB}=\eta_{\mu\nu}\partial_Au^{\mu}\partial_Bu^{\nu}$, $\partial_Au^{\mu}=\frac{\partial u^{\mu}}{\partial\zeta^A}$. Here $\eta_{\mu\nu}$ is the Minkowski metric, which has the form $\eta_{\mu\nu}du^{\mu}du^{\nu}=-(du^0)^2+(du^1)^2+\ldots+(du^{d-1})^2$ in Cartesian coordinates $(u^{\mu})$,
$(\zeta^A)=(t,x^a)$ is a coordinates on $\Sigma$, $t$ is some global coordinate whose level sets $M_t$ foliate $\Sigma$, and $(u^a)$ is local coordinates on each $M$ with $a=1,2$.

The membrane system (or relativistic strings, see \cite{Hope}) arises in the context of membrane, supermembrane theories and higher-dimensional extensions of string theory. In Lorentzian geometric, since the critical point of the Nambu-Goto action gives rise to submanifolds with vanishing mean curvature, they are also called timelike minimal surface equations. They are one case of an important class of geometric evolution equations which is the Lorentzian analogue of the minimal submanifold equations. Since such equations possess plenty of geometric phenomenon and complicated structure (for example, they develop singularities in finite time, and degenerate and not strictly hyperbolic properties), much work is attracted in recent years. Lindblad \cite{Lindblad} and Brendle \cite{Brendle} proved the local and global well-posedness of timelike minimal surface equation with sufficiently small initial data in high dimension, respectively.
The case of general codimension and local well-posedness in the light cone gauge was studied by Allen, Andersson and Isenberg \cite{Allen} and Allen, Andersson and Restuccia \cite{Allen1}, respectively. He and Huang \cite{Huang2} obtained the existence of smooth solutions for the string and membrane equation in harmonic coordinates. Kong and his collaborators \cite{Kong2} obtained a representation formula of solution and presented many numerical evidence where singularity formation is prominent. Bellettini {\it et al.} \cite{Bel} showed that if the initial curve is a centrally symmetric convex curve and the initial velocity is zero, the string shrinks to a point in finite time. They noticed that it should be noted that the string does not become extinct there, but rather comes out of the singularity point, evolves back to its original shape and then periodically
afterwards. Nguyen and Tian \cite{Ngu} showed that timelike maximal cylinders in $\textbf{R}^{1+2}$ always develop singularities in finite time and that, infinitesimally at a generic singularity, their time slices are evolved by a rigid motion or a self-similar motion. They also proved a mild generalization in
non-flat backgrounds. He and Kong \cite{He} showed that there exist spherical symmetric solutions for relativistic membranes in the Schwarzschild spacetime. Huang and Kong \cite{Huang1} studied the motion of relativistic torus.

Before giving the motion equation of the canonical reduction of action (\ref{E1-5}) in the light cone gauge, we briefly recall the gauge fixing procedure, one can see \cite{Allen1} for more details. For gauge theories, one can see Dirac \cite{Dirac} for more details. We use the null coordinates $(u^+,u^-,u^m)$ and the volume form $\sqrt{w}$ on $M$ to specify the light cone gauge.
More precisely, $u^+$ and the corresponding conjugate momenta $p_+$ are
\begin{eqnarray*}
u^+=-p^0_{-}t,~~p_+=\frac{1}{2}(c\sqrt{w})^{-\frac{1}{2}}(p_mp^m+\gamma),
\end{eqnarray*}
$u^{-}$ and the corresponding conjugate momenta $p_-$ satisfy
\begin{eqnarray*}
\partial_au^{-}=-(c\sqrt{w})^{-\frac{1}{2}}p_m\partial_au^m,~~p_{-}=p^0_{-}\sqrt{w},
\end{eqnarray*}
where $p^0_{-}$ is a constant, $p_m=\frac{N}{\sqrt{\gamma}}(\partial_tu_{\mu}-N^a\partial_au_{\mu})$, $N^a=-g^{00}g^{0a}$, the metric $g^{AB}$ can be determined by performing the usual ADM decomposition of $g$. In order to eliminate the conjugate pairs $(x^-,p_{-})$ and $(x^+,p_{+})$, we provide the integrability condition
\begin{eqnarray*}
\int_{\mathcal{C}}p_m\sqrt{w}dx^m=0,~~for~all~closed~curves~\mathcal{C}~in~\Sigma.
\end{eqnarray*}
Furthermore, the light cone action can be obtained
\begin{eqnarray*}
\mathcal{S}=\int_{\Sigma}(p_m\partial_tu^m-cp_++\partial_t[p_-x^-]),
\end{eqnarray*}
the corresponding reduced Hamiltionian is
\begin{eqnarray*}
\mathcal{H}=\frac{\sqrt{w}}{4}\left(2\frac{p_mp^m}{\sqrt{w}^2}+\{u^m,u^n\}\{u_m,u_n\}\right)+p_m\{\Lambda,u^m\},
\end{eqnarray*}
where $\Lambda$ is determined by $N^a$, $\{\cdot\}$ denotes the Poisson bracket associated to a symplectic structure on $M$ in local coordinates.

Thus the equations of motion associated to the reduced Hamiltonian is the following second order system in light cone gauge (see also \cite{Hope})
\begin{eqnarray}\label{E1-1}
\partial_{tt}u^m=\{\{u^m,u^n\},u_n\},
\end{eqnarray}
with the initial data
\begin{eqnarray}\label{E1-0}
u(0)=u_0,~~\partial_tu(0)=u_1,
\end{eqnarray}
and the constraints
\begin{eqnarray}\label{E1-1R0}
\{\partial_tu^m,u_m\}=0.
\end{eqnarray}
Direct computation shows that the right hand side of (\ref{E1-1}) can be written in local coordinates as
\begin{eqnarray*}
\{\{u^m,u^n\},u_n\}=w^{-1}(\epsilon^{ac}\epsilon^{bd}\gamma(u)_{cd}\delta_{mn}-\epsilon^{ac}\epsilon^{bd}\partial_cu_m\partial_du_n)\partial_a\partial_bu^n+LOT,
\end{eqnarray*}
where $\gamma(u)_{cd}=\partial_cu^m\partial_du_m$ is the metric, $\epsilon^{ac}$ is the anti-symmetric symbol with two indices and LOT denotes the lower order terms. It is obviously that even the metric $\gamma(u)_{cd}$ is Riemannian, the first term in the right hand side of above inequality can cause the symbol to be degenerate. So
the hyperbolic property of equation (\ref{E1-1}) can not hold. As done in \cite{Allen1},
we modify the equation (\ref{E1-1}) by differentiating it with respect to $t$, then using the Jacobi identity and constraints (\ref{E1-1R0}) to obtain
\begin{eqnarray}\label{E1-1R00}
\partial_t\partial_{tt}u^m=\{\{\partial_tu^m,u^n\},u_n\}+2\{\{u^m,u^n\},\partial_tu_n\}.
\end{eqnarray}
Let
\begin{eqnarray}\label{E1-1R1}
\partial_{t}u^m=v^m,
\end{eqnarray}
system (\ref{E1-1R00}) can be written in local coordinates as
\begin{eqnarray}\label{E1-2}
\partial_{tt}v^m-\partial_a(\epsilon^{ac}\epsilon^{bd}\gamma(u)_{cd}w^{-1}\partial_bv^m)&-&2\partial_a(\epsilon^{ab}\epsilon^{cd}w^{-1}\partial_cu^m\partial_du^n\partial_bv_n)\nonumber\\
&-&\epsilon^{ab}w^{-\frac{1}{2}}\partial_c(\epsilon^{cd}w^{-\frac{1}{2}})(\gamma_{bd}\partial_av^m+2\partial_au^m\partial_bu^n\partial_dv_n)=0,~~~~~
\end{eqnarray}
where $\gamma(u)^{ab}=\epsilon^{ac}\epsilon^{bd}\gamma(u)_{cd}$, $a,b,c,d,\ldots=1,2$, $m,n,\ldots=1,\ldots,d-2$.

The corresponding initial data is
\begin{eqnarray}\label{E1-3}
u^m(0)=u^m_0,~~v^m(0)=v^m_0,~~\partial_tv^m(0)=v^m_1.
\end{eqnarray}
We need the following condition of initial data which make that
the solution $(u^m,v^m)$ of the modified system (\ref{E1-2}) is also the solution of equation ({\ref{E1-1}})
\begin{equation}
\label{E1-4} \left\{
\begin{array}{lll}
&&\{v_0^m,u_0^n\}\delta_{mn}=0,\\
&&v_1^m-\{\{u_0^m,u_0^n\},u_0^l\}\delta_{nl}=0.
\end{array}
\right.
\end{equation}
When initial Riemannian metric $\gamma(u_0)_{cd}$ is non-degenerate, second order system (\ref{E1-1R1})-(\ref{E1-2}) is a strictly hyperbolic system.
Allen, Anderson and Restuccia \cite{Allen1} showed that system (\ref{E1-1R1})-(\ref{E1-2}) has a unique solution $(u^m,v^n)\in\textbf{C}^1([0,T];\textbf{H}^k)\times\textbf{C}_T^k$ for $k\geq4$. Moreover, they got a blow up criterion.



The study of influence for degenerate metrics in nonlinear field equation is a very interesting problem, which can cause a degenerate nonlinear PDE.
Two of famous degenerate metrics are Schwarzschild metric and Kerr metric, which are two special solutions of nonlinear vacuum Einstein field equation. The corresponding stability problem is still an open problem (see \cite{C} for more detail). Anther goal of this paper is to deal with that initial Riemannian metric $\gamma(u_0)_{cd}$ is degenerate in LCG field equations,
then system (\ref{E1-1R1})-(\ref{E1-2}) is a degenerate hyperbolic system. More precisely,
we need to analyse the following quasi-linear toy model
\begin{eqnarray}\label{E1-6}
v_{tt}-\partial_a(\gamma^{ab}(t,x)\partial_bv)=g\left(t,x,\partial_av,\partial_{a}\partial_bv,\int_0^t\partial_av\right),
\end{eqnarray}
where $(t,x)\in[0,\frac{T}{\sqrt{\varepsilon}}]\times M$, $\gamma^{ab}(t,x)$ is a matrix which can be vanish at some point $(t,x)$, $f$ is a smooth bounded function and $g$ is a nonlinear term satisfies certain bounded conditions.
The Cauchy problem of degenerate nonlinear wave equation is a very interesting problem in hyperbolic differential equations. Colombini and Spagnolo \cite{Colom}
showed that the following Cauchy problem of one dimensional degenerate wave equation is not well posedness
\begin{eqnarray}\label{El}
u_{tt}-a(t)u_{xx}=0,
\end{eqnarray}
where $(t,x)\in[0,T]\times\textbf{R}$, $a(t)$ is a nonnegative smooth function and oscillates an infinite number of times. On the other hand, Colombini, De Giorgi and Spagnolo \cite{Colom1} showed that
any Cauchy problem as (\ref{El}) is well-posed in the space of the periodic real analytic functionals.
So it is interesting problem that what conditions of leading coefficients can make the degenerate nonlinear wave equation (even linear degenerate wave equation) being well-posed. We refer the paper of Han et.al.\cite{Han1,Han2} for Levi conditions and the paper of Nishitani \cite{Nis} for coefficients analysis conditions. In order to study the long time existence of a kind of (\ref{E1-6}), we require that the lower order coefficients satisfies certain growth conditions, i.e. Levi conditions.
To deal with loss of regularity, we construct a new Nash-Moser iteration scheme to solve denegerate wave equation (\ref{E1-2}).  This method have been employed by S. Klainerman \cite{Klainerman1,Klainerman2} to obtain the global existence and long time behavior for a class of evolution equations. Papageorgiou-R$\check{a}$dulescu-Repov$\check{s}$ \cite{PRR} studied nonlinear second order evolution inclusions with noncoercive viscosity term.

We denote by $\partial$ any time derivative $\partial_t$ or any coordinate derivative $D$.
For system (\ref{E1-2}),
we assume that
\begin{eqnarray}\label{E2-36}
\gamma(u_0)_{cd}=\gamma_0(x)\gamma(x)_{cd},
\end{eqnarray}
where $\gamma_0(x)$ is a smooth function, which satisfies
\begin{eqnarray}\label{E2-38}
0\leq\gamma_0(x)\leq1,~~|\partial_a\gamma_0(x)|\leq c_0\gamma_0(x).
\end{eqnarray}
One of simple example satisfying (\ref{E2-38}) is $r_0(x)=e^{-x}-1$.

The Riemannian metric $\gamma(x)_{cd}$ is non-degenerate and satisfies elliptic condition
\begin{eqnarray}\label{E2-37}
\gamma_1|\xi|^2\leq\gamma(x)_{cd}\xi_c\xi_d\leq\gamma_2|\xi|^2,~~\forall\xi\in M
\end{eqnarray}
 and the Levi condition
\begin{eqnarray}\label{E2-37R}
|\partial_a(\gamma(u_0)_{cd})\xi_c\xi_d|\leq c_1\gamma_0(x)|\xi|^2,~~\forall\xi\in M,
\end{eqnarray}
where $\gamma_1$, $\gamma_2$, $c_0$ and $c_1$ denote positive constants.

Even if the initial data make the Riemanian metric $\gamma_{cd}(u)$ degenerate, we can prove that
the well-posedness on time interval $[0,\varepsilon^{-\frac{1}{2}}T]$ is approached, $i.e.$ the existence of small amplitude smooth solution on $[0,\varepsilon^{-\frac{1}{2}}T]$.
More precisely, we have the following theorem.
\begin{theorem}
Assume that (\ref{E2-36})-(\ref{E2-37R}) holds. Let $k\geq2$, $T>0$ and $(\varepsilon^2u_0,\varepsilon^2v_0,\varepsilon^2v_1)\in\textbf{H}^k\times\textbf{H}^k\times\textbf{H}^{k-1}$.
Then, for a sufficient small constant $\varepsilon>0$, nonlinear second order system (\ref{E1-1}) with a small initial data (\ref{E1-0}) has a unique smooth solution
\begin{eqnarray*}
u^m(t,x)=u^m_0(x)+\int_0^tv^m(s,x)ds,~~(t,x)\in[0,\frac{T}{\sqrt{\varepsilon}}]\times M,
\end{eqnarray*}
which satisfies the constraint equation (\ref{E1-1R0}), where $v^m$ is a unique smooth solution of nonlinear degenerate hyperbolic system (\ref{E1-2}) with initial data (\ref{E1-3})--(\ref{E1-4}).
\end{theorem}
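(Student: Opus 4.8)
The plan is to reduce Theorem 1.2 to a long-time existence statement for the modified degenerate hyperbolic system \eqref{E1-2} with data \eqref{E1-3}--\eqref{E1-4}, and then to construct that solution by a Nash--Moser iteration tailored to the degeneracy. First I would record the algebraic reduction: differentiating \eqref{E1-1} in $t$ and using the Jacobi identity together with the constraint \eqref{E1-1R0} yields \eqref{E1-1R00}, and setting $v^m=\partial_t u^m$ as in \eqref{E1-1R1} converts this into \eqref{E1-2}. The compatibility conditions \eqref{E1-4} are exactly what is needed to propagate the constraint $\{\partial_t u^m,u_m\}=0$: one checks that the quantity $C:=\{v^m,u_m\}$ satisfies a linear homogeneous second-order evolution equation in $t$ with coefficients built from $u$, $v$, so that $C(0)=\partial_t C(0)=0$ forces $C\equiv 0$; this is the step that guarantees a solution of the modified system \eqref{E1-2} is genuinely a solution of \eqref{E1-1}, and hence produces $u^m=u^m_0+\int_0^t v^m$ as claimed.

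Next I would set up the linearized problem underlying the Nash--Moser scheme. Under the structural hypotheses \eqref{E2-36}--\eqref{E2-37}, the principal part of \eqref{E1-2} is $\partial_a(\gamma_0(x)\gamma^{ab}(x)\partial_b v^m)$ plus a semilinear correction $2\partial_a(\epsilon^{ab}\epsilon^{cd}w^{-1}\partial_c u^m\partial_d u^n\partial_b v_n)$, and the energy identity for the linearization involves the weighted norm $\|v\|^2_{L^2}+\|\sqrt{\gamma_0}\,Dv\|^2_{L^2}+\|\partial_t v\|^2_{L^2}$. The Levi condition \eqref{E2-39}, $|\partial_a(\gamma(u_0)_{cd})\xi_c\xi_d|\le c_2\gamma_0(x)|\xi|^2$, is precisely what allows the first-order commutator terms arising when one differentiates the equation to be absorbed by $\sqrt{\gamma_0}\,D$ times a controlled factor, so that the weighted energy inequality closes with a loss of at most a fixed number of derivatives — the characteristic loss that Nash--Moser is designed to handle. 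The small parameter $\varepsilon$ multiplies the genuinely nonlinear (quadratic and higher) terms, so the energy estimate takes the schematic form $\frac{d}{dt}E_k \lesssim E_k + \sqrt{\varepsilon}\,E_k^{3/2}$ (after rescaling $t\mapsto \sqrt{\varepsilon}\,t$, this is $\frac{d}{ds}E_k\lesssim \sqrt{\varepsilon}E_k + E_k^{3/2}$ on $s\in[0,T]$), giving uniform bounds on $[0,\varepsilon^{-1/2}T]$ once the data norm is of size $O(1)$ and $\varepsilon$ is small.

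With the linear theory in hand, I would run the iteration: given an approximate solution $v_{(j)}$, solve the linearized degenerate equation for the increment, apply a smoothing operator $S_{\theta_j}$ with $\theta_j=\theta_0^{(3/2)^j}$ to compensate the derivative loss, and estimate the new error. The quadratic structure of the nonlinearity in \eqref{E1-2} (products of two first-order derivatives of $u$ and one of $v$, divided by the smooth nonvanishing weight $w$) gives the required tame estimates; I would use the interpolation (Gagliardo--Nirenberg) inequalities in $\textbf{H}^k(M)$ and the fact that $M$ is compact. Convergence in the high norm, together with the uniform-in-$\varepsilon$ lifespan from the energy estimate, produces the solution $v^m$ on $[0,\varepsilon^{-1/2}T]\times M$; smoothness follows by a bootstrap on $k$, and uniqueness from the weighted energy estimate applied to the difference of two solutions (the degeneracy only helps here, since $\gamma_0\ge 0$).

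The main obstacle is the weighted energy estimate for the linearization: one must show that every term produced by commuting $D^\alpha$ past the degenerate operator $\partial_a(\gamma_0\gamma^{ab}\partial_b\cdot)$, as well as the constraint-related and lower-order terms, is bounded by the weighted energy $E_k$ without introducing an uncontrolled factor of $\gamma_0^{-1}$. This is exactly where the Levi condition \eqref{E2-38}--\eqref{E2-39} is essential — $|\partial_a\gamma_0|\le c_0\gamma_0$ lets one write $\partial_a\gamma_0 = (\partial_a\gamma_0/\sqrt{\gamma_0})\sqrt{\gamma_0}$ with the first factor bounded, so the dangerous terms carry a full $\sqrt{\gamma_0}$ weight matching the energy. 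I expect the bookkeeping of how many derivatives are lost (and hence the precise sequence $\theta_j$ and the minimal regularity $k\ge 2$) to require care, but no new idea beyond the combination of the Levi condition with the Nash--Moser smoothing.
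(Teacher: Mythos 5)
Your overall architecture — propagating the constraint \eqref{E1-1R0} through the modified system, proving a weighted energy estimate for the degenerate linearization under the Levi condition \eqref{E2-38}--\eqref{E2-39}, and closing with a Nash--Moser iteration — is the same as the paper's. The genuine gap is in the step that is supposed to produce the interval $[0,\varepsilon^{-1/2}T]$. In the system \eqref{E1-2} there is no small parameter at all, and Theorem 1.2 allows data of size $O(1)$ in $\textbf{H}^k$; so your assertion that ``the small parameter $\varepsilon$ multiplies the genuinely nonlinear terms'' has to be manufactured. The paper does this by rescaling amplitude \emph{and} time, $v^m(t,x)\mapsto\varepsilon^2v^m(\sqrt{\varepsilon}\,t,x)$, which converts \eqref{E2-1R1} into \eqref{E3-1}: a problem on the fixed interval $[0,T]$ with $\varepsilon^{-1}$ on the degenerate principal part and $\varepsilon^2$ on $\mathcal{F}$; the $\varepsilon$-smallness is then consumed inside the iteration itself (the superquadratic error decay $|||E^{l+1}|||_{s_{l+1},T}\leq c(16\varepsilon|||E^0|||_{s_0,T})^{2^{l+1}}$), not in a Gronwall/continuation argument. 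Your route instead extracts the lifespan from the schematic inequality $\frac{d}{dt}E_k\lesssim E_k+\sqrt{\varepsilon}\,E_k^{3/2}$, and this cannot work as stated: the $O(1)$ linear term gives at best $E_k(t)\lesssim e^{Ct}E_k(0)$, useless at $t\sim\varepsilon^{-1/2}$; the ``rescaled'' form you quote, $\frac{d}{ds}E_k\lesssim\sqrt{\varepsilon}E_k+E_k^{3/2}$, is not what the substitution $t\mapsto\sqrt{\varepsilon}\,t$ yields (that substitution multiplies the linear coefficient by $\varepsilon^{-1/2}$, not $\varepsilon^{1/2}$); and with $O(1)$ data the term $E_k^{3/2}$ by itself limits the time to $O(1)$ unless smallness is injected into the amplitude. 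So either you include the amplitude rescaling (equivalently, a smallness normalization of the unknown) and let $\varepsilon$ drive the iteration as in Section 3, or the long-time claim is not established.

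A secondary point: for the degenerate linear step the paper does not rely only on the $\sqrt{\gamma_0}$-weighted energy you describe. To avoid the loss coming from the degenerate weight it introduces an auxiliary function $\tilde h$ solving $\partial_{tt}\tilde h-\partial_t\tilde h-\tilde h=g$, estimates $\hat h=h-\tilde h$ with the weight $\varrho^{-1}$ (Lemmas 2.3--2.5), and obtains \emph{existence} (not just the a priori bound) by regularizing $\varrho\mapsto\varrho+\delta$ into a strictly hyperbolic problem and passing to the limit (proof of Theorem 2.2). Your Levi-condition bookkeeping is the right mechanism for the commutator terms, but your outline should also account for how solvability of the degenerate linearized equation is actually obtained, since the Nash--Moser step needs the solution $h^{l+1}$, not merely an estimate for it.
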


In Theorem 1, $\varepsilon$ is a sufficient small positive parameter. Since the smooth solution of (\ref{E1-0}) which is constructed in section 3, exists on $[0,\frac{T}{\sqrt{\varepsilon}}]\times M$, our result gives a large time behavior of solution for equation (\ref{E1-0}). Rescaling amplitude and time as $v^m(t,x)\mapsto\varepsilon^2 v^m(\sqrt{\varepsilon}t,x)$, one can see that the $\varepsilon$ measures the nonlinear terms of corresponding nonlinear wave system. One of important factors of the converge of Nash-Moser iteration scheme is the sufficient small property of
the parameter $\varepsilon$. In the other word, this iteration scheme only holds for a sufficient small $\varepsilon$. One can see section 3 for more details. It follows from the rescaling technique that the initial data $(v_0,v_1)$ is a small initial data, $i.e.$ $(\varepsilon^2v_0,\varepsilon^2 v_1)$. By the initial data relation (\ref{E1-4}), we know that the intial data $u_0$ is also a smalll initial data with size of $\varepsilon^2$.

The organization of this paper is as follows. In Section 2, we establish the existence of linear degenerate wave equation which  arises in the linearization of system (\ref{E1-2}). Section 3 is devoted to solving the initial value problem of the bosonic membrane equation (\ref{E1-1}) by means of a new Nash-Moser iteration scheme (For general Nash-Moser implict function theorem, one can see \cite{H,Moser,Nash,R,Yan}).  Furthermore, we obtain this solution is a unique solution in $\textbf{B}_{R,T}^{\bar{s}}$ (for $\textbf{B}_{R,T}^{\bar{s}}$, see (\ref{E2-35})).

\begin{acknowledgements}
The first author expresses his sincere thanks to Prof G. Tian for introducing him to the subject of minimal surface and his many discussions, and his sincere thanks to Prof Q. Han for his discussion on Nash-Moser iteration scheme, and thanks to prof S.J. Huang for his interest on this problem and some discussions.
The first author is supported by NSFC (No. 11771359), and the Fundamental Research Funds for the Central Universities (No. 20720190070 and 20720180009).
The second author is supported by NSFC (No. 11871199).
\end{acknowledgements}

\section{Energy estimates of the linearized equation}
This section is to discuss the linearized system for (\ref{E1-1}), which is a degenerate linear hyperbolic system. Throughout this section, we denote points in $\Sigma$ by $(t,x)$ with $t\in\textbf{R}$ and $x\in M$. For $d$-dimensional compact Riemannian manifold $M$ and smooth metric $\textbf{g}$,
the tangent space of $M$ is denoted by $T_xM$. $\Gamma^{\infty}(T^lM)$ is the set of all smooth tensor fields of type $(0,l)$, $\forall l\in[1,\infty]$. For each $x\in M$, $T^l_xM$ is an inner product space defined as follows. Let $\{e_1,\ldots,e_n\}$ be an orthonormal basis of $T_xM$. For any $\psi_1,\psi_2\in T^l_xM$, the inner product is given by
\begin{eqnarray*}
\langle\psi_1,\psi_2\rangle_{T_x^lM}=\sum_{i_1,i_2,\ldots,i_l=1}^n\psi_1(e_{i_1},\ldots,e_{i_l})\psi_2(e_{i_1},\ldots,e_{i_l}).
\end{eqnarray*}
In view of above inner product, $\Gamma^{\infty}(T^lM)$ are inner product spaces endowed with
\begin{eqnarray*}
\langle\psi_1,\psi_2\rangle_{\Gamma^{\infty}(T^lM)}=\int_M\langle\psi_1,\psi_2\rangle_{T^lM}\sqrt{w},~~\forall\psi_1,~\psi_2\in\Gamma^{\infty}(T^lM).
\end{eqnarray*}
We denote $\textbf{L}^2(M,\Gamma^{\infty}(T^lM))$ the completions of $\Gamma^{T^lM}$ in above inner product. The Sobolev space $\textbf{H}^l(M)$ is the completion of $\textbf{C}^{\infty}(M)$ with respect to the norm
\begin{eqnarray*}
\|u\|^2_{\textbf{H}^l}=\|u\|^2_{\textbf{L}^2}+\sum_{i=1}^l\|D^iu\|^2_{\textbf{L}^2(M,\Gamma^{\infty}(T^lM))}.
\end{eqnarray*}
In this paper, we consider the Sobolev space and function space of functions on $M$ in a fixed coordinate.
For any $l\in[1,\infty]$, $\textbf{H}^l$ denote the Sobolev space of functions on $M$ whose derivatives of up to order $l$ are square integrable. The corresponding norm is
\begin{eqnarray*}
\|u\|^2_{\textbf{L}^2}=\int_{M}|u|^2\sqrt{w},~~\|u\|_{\textbf{H}^1}^2=\|u\|^2_{\textbf{L}^2}+\int_{M}|Du|^2\sqrt{w},
\end{eqnarray*}
where $Du$ denote any coordinate derivative $\partial_au$.

We denote the spatially weighted Lebesgue spaces by $\textbf{L}^2_l(\mathbb{R}^n)$, which is equipped with the norm
\begin{eqnarray}\label{E2-0}
\|u\|_{\textbf{L}^2_l}=\int_{M}(1+|x|^2)^{\frac{l}{2}}|u|^2.
\end{eqnarray}
Then the Fourier transform is an isomorphism between $\textbf{L}^2_l(\mathbb{R}^n)$ and $\textbf{H}^l(\mathbb{R}^n)$.

Next we introduce some other spaces which is used in this paper. $\textbf{L}^{\infty}$ spaces and $\textbf{W}^{1,\infty}$ spaces equip with the norms
\begin{eqnarray*}
\|u\|_{\textbf{L}^{\infty}}=ess\sup_{M}|u|,~~\|u\|_{\textbf{W}^{1,\infty}}=\|u\|_{\textbf{L}^{\infty}}+\|Dv\|_{\textbf{L}^{\infty}},
\end{eqnarray*}
respectively.

$\textbf{C}^{r}([0,T];\textbf{H}^l)$ denotes the
function spaces with the norm
\begin{eqnarray*}
\|u\|_{\textbf{C}^{r}([0,T];\textbf{H}^l)}=\sup_{[0,T]}\sum_{i=0}^r\|\partial_{t}^iu\|_{\textbf{H}^l}.
\end{eqnarray*}

$\textbf{C}_T^{l}:=\cap_{i=0}^2\textbf{C}^i([0,T];\textbf{H}^{l-i})$ denotes function spaces with the spacetime norms
\begin{eqnarray*}
\|u\|_{l}^2=\sum_{i=0}^2\|\partial_{t}^iu\|^2_{\textbf{H}^{l-i}},~~|||u|||_{l,T}=\sup_{[0,T]}\|u(\cdot)\|_l.
\end{eqnarray*}
Generic constants are denoted by $c_0,c_1,\ldots$, their values may vary in the same
formula or in the same line.

We denote
\begin{eqnarray*}
\mathcal{B}(u)^a\partial_av^m&=&2\partial_a(\epsilon^{ab}\epsilon^{cd}w^{-1}\partial_cu^m\partial_du^n\partial_bv_n)\nonumber\\
&&-\epsilon^{ab}w^{-\frac{1}{2}}\partial_c(\epsilon^{cd}w^{-\frac{1}{2}})(\gamma_{bd}\partial_av^m+2\partial_au^m\partial_bu^n\partial_dv_n).
\end{eqnarray*}
Then equation (\ref{E1-2}) can be rewritten as
\begin{eqnarray}\label{E2-1}
\partial_{tt}v^m-\partial_a(\epsilon^{ac}\epsilon^{bd}\gamma(u)_{cd}\partial_bv^m)+\mathcal{B}(u)^a\partial_av^m=0.
\end{eqnarray}
It follows from (\ref{E1-1R1}) that
\begin{eqnarray}\label{E2-1R3}
\gamma(u)_{cd}=\gamma(u_0)_{cd}+\partial_cu_0^n\int_0^t\partial_dv_n+\partial_du_{0n}\int_0^t\partial_cv^n+\gamma(\int_0^tv)_{cd},
\end{eqnarray}
which implies that system (\ref{E2-1}) is equivalent to
\begin{eqnarray}\label{E2-1R1}
\partial_{tt}v^m&-&\partial_a(\epsilon^{ac}\epsilon^{bd}\gamma(u_0)_{cd}\partial_bv^m)-\partial_a(\epsilon^{ac}\epsilon^{bd}\partial_cu_0^m(\int_0^t\partial_dv_n)\partial_bv^m)\nonumber\\
&&-\partial_a(\epsilon^{ac}\epsilon^{bd}\partial_du_{0m}(\int_0^t\partial_cv^n)\partial_bv^m)-\partial_a(\epsilon^{ac}\epsilon^{bd}\gamma(\int_0^tv)_{cd}\partial_bv^m)\nonumber\\
&&+\mathcal{B}(u)^a\partial_av^m=0.
\end{eqnarray}
Since we assume that $\gamma(u_0)_{cd}$ is a degenerate Riemannian metric, system (\ref{E2-1R1}) is a degenerate quasi-linear wave system.
Linearizing quasi-linear degenerate wave system (\ref{E2-1R1}) around $v$ leads to the following linearized system with an external force
\begin{eqnarray}\label{E2-13}
\partial_{tt}h^m&-&\partial_a(\epsilon^{ac}\epsilon^{bd}\gamma(u_0)_{cd}\partial_bh^m)-\varepsilon\partial_a(\epsilon^{ac}\epsilon^{bd}\partial_cu_0^m(\int_0^t\partial_dv_n)\partial_bh^m)\nonumber\\
&&-\varepsilon\partial_a(\epsilon^{ac}\epsilon^{bd}\partial_cu_0^m(\int_0^t\partial_dh_n)\partial_bv^m)-\varepsilon\partial_a(\epsilon^{ac}\epsilon^{bd}\partial_du_{0m}(\int_0^t\partial_cv^n)\partial_bh^m)\nonumber\\
&&-\varepsilon\partial_a(\epsilon^{ac}\epsilon^{bd}\partial_du_{0m}(\int_0^t\partial_ch^n)\partial_bv^m)-\varepsilon\partial_a(\epsilon^{ac}\epsilon^{bd}\gamma(\int_0^tv)_{cd}\partial_bh^m)\nonumber\\
&&-\varepsilon\partial_a(\epsilon^{ac}\epsilon^{bd}(\int_0^t\partial_ch^n)(\int_0^t\partial_dv_n)\partial_bv^m)-\varepsilon\partial_a(\epsilon^{ac}\epsilon^{bd}(\int_0^t\partial_cv^n)(\int_0^t\partial_dh_n)\partial_bv^m)\nonumber\\
&&+\varepsilon\mathcal{B}(u)^a\partial_ah^m=g(t,x),
\end{eqnarray}
where
\begin{eqnarray*}
\mathcal{B}(u)^a\partial_ah^m&=&2\partial_a(\epsilon^{ab}\epsilon^{cd}w^{-1}(\int_0^t\partial_ch^m)(\int_0^t\partial_dv^n)\partial_bv_n)\nonumber\\
&&+2\partial_a(\epsilon^{ab}\epsilon^{cd}w^{-1}(\int_0^t\partial_cv^m)(\int_0^t\partial_dh^n)\partial_bv_n)\nonumber\\
&&+2\partial_a(\epsilon^{ab}\epsilon^{cd}w^{-1}(\int_0^t\partial_cv^m)(\int_0^t\partial_dv^n)\partial_bh_n)\nonumber\\
&&-\epsilon^{ab}w^{-\frac{1}{2}}\partial_c(\epsilon^{cd}w^{-\frac{1}{2}})(\gamma_{bd}\partial_ah^m+(\int_0^t\partial_bh^n)(\int_0^t\partial_dv_n)\partial_av^m)\nonumber\\
&&-\epsilon^{ab}w^{-\frac{1}{2}}\partial_c(\epsilon^{cd}w^{-\frac{1}{2}})((\int_0^t\partial_bv^n)(\int_0^t\partial_dh_n)\partial_av^m+2(\int_0^t\partial_bv^n)(\int_0^t\partial_ah^m)\partial_dv_n)\nonumber\\
&&-2\epsilon^{ab}w^{-\frac{1}{2}}\partial_c(\epsilon^{cd}w^{-\frac{1}{2}})((\int_0^t\partial_bh^n)(\int_0^t\partial_av^m)\partial_dv_n+(\int_0^t\partial_bv^n)(\int_0^t\partial_av^m)\partial_dh_n).
\end{eqnarray*}
This following result states the existence of linearized system with an external force $g(t,x)$ of the nonlinear bosonic membrane system (\ref{E2-1}). Here the external force $g(t,x)$ denotes the error term which is produced by carrying out the Nash-Moser iteration scheme in next section.

For $T>0$, $k\geq2$ and $0<R<1$, we define
\begin{eqnarray}\label{E2-35}
\textbf{B}_{R,T}^k:=\{u\in\textbf{C}_T^k:~|||u|||_{k,T}\leq R<1\}.
\end{eqnarray}
\begin{theorem}
Let $k\geq2$. Assume that (\ref{E2-36})-(\ref{E2-37R}) and $v\in\textbf{B}_{R,T}^k$ hold. Then for any initial data $(h_0,h_1)\in\textbf{H}^{k-1}\times\textbf{H}^{k-2}$, there exists an $\textbf{H}^{k-1}$ solution-$h(t,x)$ of (\ref{E2-13}) on $[0,T]\times M$. Moreover, for any $1\leq s\leq k-1$ and sufficient small $\varepsilon>0$, there holds
\begin{eqnarray}\label{E2-32}
|||h|||_{s,T}\leq c_3\left(|||h_0|||_{s,T}+|||h_1|||_{s,T}+|||g|||_{s,T}\right).
\end{eqnarray}
\end{theorem}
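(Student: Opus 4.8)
The plan is to establish the energy estimate \eqref{E2-32} first, via a weighted energy functional adapted to the degenerate principal part $-\partial_a(\gamma(u_0)^{ab}\partial_b\cdot)$ with $\gamma(u_0)_{cd}=\gamma_0(x)\gamma(x)_{cd}$, and then recover existence of the $\textbf{H}^{k-2}$ solution by a standard Galerkin/viscosity approximation argument together with the a priori bound. The core difficulty is purely the degeneracy of $\gamma_0(x)$, since away from its zero set the equation is a perturbation of a strictly hyperbolic system; the whole point is that the Levi condition \eqref{E2-39}, namely $|\partial_a\gamma(u_0)_{cd}\,\xi_c\xi_d|\le c_2\gamma_0(x)|\xi|^2$, lets the first-derivative terms produced in the energy identity be absorbed by the good term $\int_M\gamma(u_0)^{ab}\partial_ah\,\partial_bh\,\sqrt{w}$ rather than by a coercive term that does not exist.

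First I would define, for $1\le s\le k-2$, the energy
\begin{eqnarray*}
E_s(t)=\sum_{|\alpha|\le s}\int_M\Bigl(|\partial_t D^\alpha h|^2+\gamma(u_0)^{ab}\partial_a D^\alpha h\,\partial_b D^\alpha h+|D^\alpha h|^2\Bigr)\sqrt{w},
\end{eqnarray*}
where $D^\alpha$ ranges over spatial derivatives, and I would also need to control $\partial_{tt}h$ and mixed $\partial_t D^\alpha h$ in the appropriate $\textbf{H}^{s-i}$ norms so that the left side of \eqref{E2-32} (the $|||\cdot|||_{s,T}$ norm, i.e. the $\textbf{C}_T^s$ spacetime norm) is dominated by $\sup_{[0,T]}E_s^{1/2}$ up to using the equation to trade one time derivative for two spatial ones. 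Differentiating \eqref{E2-13} by $D^\alpha$, pairing with $\partial_t D^\alpha h$, and integrating over $M$, the leading term gives $\tfrac12\frac{d}{dt}\int|\partial_t D^\alpha h|^2$; integration by parts on the degenerate principal term gives $\tfrac12\frac{d}{dt}\int\gamma(u_0)^{ab}\partial_a D^\alpha h\,\partial_b D^\alpha h$ plus a commutator term $\int[\,D^\alpha,\gamma(u_0)^{ab}\,]\partial_b h\cdot\partial_a\partial_t D^\alpha h$, which after a further integration by parts in $t$ or a careful reorganization reduces to terms of the schematic form $\int(\partial\gamma(u_0))\,\partial D^{\le s}h\,\partial_t D^{\le s}h$; here the Levi bound \eqref{E2-39} is exactly what is needed: $|\partial\gamma(u_0)|\le c_2\gamma_0$ means such a term is bounded by $c\,(\int\gamma_0|\partial D^{\le s}h|^2)^{1/2}E_s^{1/2}\le c\,E_s$, using $\gamma_0\gamma(x)_{cd}\ge \gamma_0\gamma_1|\xi|^2$ to compare $\int\gamma_0|\partial D^{\le s}h|^2$ with the good energy term. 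All the $\varepsilon$-prefixed terms in \eqref{E2-13} and in $\mathcal{B}(u)$ involve $v\in\textbf{B}_{R,T}^k$ and time integrals $\int_0^t\partial v$, which are bounded by $R$ and $TR$ respectively in the relevant norms; since these come with an explicit factor $\varepsilon$ (and the $\mathcal{B}$ terms carry $\int_0^t\partial h$, giving a factor $\varepsilon t\lesssim \varepsilon T$ on $h$ itself), choosing $\varepsilon$ small makes their contribution $\le \tfrac12 E_s+C_\varepsilon|||g|||_{s,T}^2$, absorbable or Gronwall-able.

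Assembling, one obtains $\frac{d}{dt}E_s(t)\le C(R,T)\,E_s(t)+C\|g(t)\|_{\textbf{H}^s}^2$ for $\varepsilon$ sufficiently small, whence Gronwall on $[0,T]$ gives $\sup_{[0,T]}E_s\le e^{C(R,T)T}\bigl(E_s(0)+\int_0^T\|g\|_{\textbf{H}^s}^2\bigr)$, and translating back through the equation (to bound $\partial_{tt}D^{s-2}h$ etc.) yields \eqref{E2-32} with $c_3=c_3(R,T)$. For existence: I would mollify the degenerate coefficient, replacing $\gamma_0(x)$ by $\gamma_0(x)+\delta$ so the regularized equation is strictly hyperbolic, solve it by the classical theory, note that the energy estimate above is \emph{uniform in $\delta$} precisely because the Levi structure is preserved ($|\partial_a(\gamma_0+\delta)|=|\partial_a\gamma_0|\le c_0\gamma_0\le c_0(\gamma_0+\delta)$), and pass to the limit $\delta\to0$ using weak-$*$ compactness in the energy space and strong convergence in lower norms to identify the limit as a solution of \eqref{E2-13}; uniqueness follows from the estimate applied to the difference of two solutions with $g=0$, $h_0=h_1=0$. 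The main obstacle, as noted, is the commutator/first-order-term bookkeeping in the energy identity — making sure every term where a derivative falls on the degenerate coefficient $\gamma(u_0)$ is controlled by $\gamma_0$ (not merely by $1$) so that it can be paired against the degenerate good term; this is the sole place where conditions \eqref{E2-36}–\eqref{E2-39} are used in an essential way, and it is what forces the loss of two derivatives ($\textbf{H}^k\to\textbf{H}^{k-2}$) and hence the Nash–Moser scheme in Section 3.
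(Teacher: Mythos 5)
Your overall strategy is sound and lands on the same two pillars as the paper — an energy estimate driven entirely by the Levi condition (\ref{E2-39}), and existence via regularizing the degenerate coefficient and passing to the limit — but the way you run the energy estimate is genuinely different from the paper's. The paper does not work on (\ref{E2-13}) directly with an unweighted degenerate energy plus Gronwall; it first recasts (\ref{E2-13}) as the toy model (\ref{E2-1R0}) via the identifications (\ref{E2-40})--(\ref{E2-14}), and then multiplies by $2e^{-\lambda t}\varphi h_t$ with the singular weight $\varphi=\varrho^{-1}$ and a large exponential parameter $\lambda$ in place of Gronwall (Lemmas 2.3--2.4); the price of the weight — that the data and the force would be divided by $\varrho$ — is paid by subtracting an auxiliary function $\tilde h$ solving (\ref{E2-18}) with the same Cauchy data, so that $\hat h=h-\tilde h$ has zero data and the weighted estimate closes (Lemma 2.5); higher orders are then obtained by induction on the vectorized system (\ref{E2-27R1}) in Lemma 2.6. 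What the paper's weight buys is control of the full, non-degenerate gradient $\int (Dh)^2$ at each stage of the induction (see (\ref{E2-26})), whereas your energy controls the top-order gradient only with the weight $\gamma_0$ and relies on the lower-order $|D^{\alpha}h|^2$ terms for the Sobolev norm — which is enough for (\ref{E2-32}) as stated, and is simpler since it avoids the auxiliary-function device. On the existence step your version is in fact cleaner than the paper's: you observe that the Levi structure survives the replacement $\gamma_0\mapsto\gamma_0+\delta$, hence the estimates are uniform in $\delta$, which is what really legitimizes the limit; the paper instead records a $\delta$-dependent constant $c_{\delta}$ in (\ref{E2-34}) and passes to the limit ``by a standard argument.'' One caveat to be honest about: the $O(\varepsilon)$ second-order terms in (\ref{E2-13}) (coefficients like $\varepsilon\,\partial u_0\int_0^t\partial v$, not proportional to $\gamma_0$) and the terms where the outer $\partial_a$ falls on $\int_0^t\partial h$ are top-order and cannot literally be absorbed by Gronwall at fixed $s$ without either putting them into the energy quadratic form (whose nonnegativity where $\gamma_0$ vanishes is then unclear) or conceding a loss of one derivative; your write-up asserts smallness of $\varepsilon$ handles them, which is essentially the same gloss the paper makes through the formal identification (\ref{E2-14}), so it does not distinguish your argument from the paper's, but it is the step that would need the most care in a full write-up.
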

In order to explain our result more clearly,
we consider a toy model which is a two dimensional linear variable coefficent wave equation (not system) in local coordinate:
\begin{eqnarray}\label{E2-1R0}
\partial_{tt}h(t,x)-\varrho(x)\partial_a(\rho^{ab}(t,x)\partial_bh(t,x))&+&B(t,x)\partial_ah(t,x)\nonumber\\
&&-\varepsilon f(Du,Dv)\int_0^{t}\partial_ah(t,x)=g(t,x),~(t,x)\in[0,T]\times M,~~~~~~
\end{eqnarray}
with the initial data
\begin{eqnarray}\label{E2-2}
h(0)=h_0,~~\partial_th(0)=h_1,
\end{eqnarray}
where $\partial_a$ $(a,b=1,2)$ denotes coordinate derivative, $i.e.$ $\partial_1=\partial_{x_1}$ and $\partial_2=\partial_{x_2}$.

Assume that $f$ can be controlled by a constant in the norm $|||\cdot|||_{s,T}$ for some $s\in[1,k-1]$ when $(Du,Dv)\in\textbf{B}_{R,T}^k$. $\rho^{ab}(t,x)$ and $B(t,x)$ satisfy
\begin{eqnarray}\label{E2-15}
&&c_4|\xi|^2\leq\rho^{ab}\xi_a\xi_b\leq c_5|\xi|^2,~~\forall \xi\in M,\\
\label{E2-15R2}
&&0\leq\varrho(x)\leq1,~~x\in M,
\end{eqnarray}
 and the Levi condition
\begin{eqnarray}\label{E2-15R1}
|(B-\partial_a(\varrho\rho^{ab})\xi_a\xi_b|\leq c_6\varrho|\xi|^2,~~\xi\in M,
\end{eqnarray}
with some positive constants $c_4\leq c_5$.


\begin{theorem}
Assume that $k\geq2$ and (\ref{E2-15})-(\ref{E2-15R1}) holds. Then for any initial data $(h_0,h_1)\in\textbf{H}^{k-1}\times\textbf{H}^{k-2}$, there exists an $\textbf{H}^{k-1}$ solution-$h(t,x)$ of (\ref{E2-1R0})-(\ref{E2-2}) on $[0,T]\times M$. Moreover, for any $1\leq s\leq k-1$ and sufficient small $\varepsilon>0$, there holds
\begin{eqnarray}\label{E2-32}
|||h|||_{s,T}\leq c_7\left(|||h_0|||_{s,T}+|||h_1|||_{s,T}+|||g|||_{s,T}\right).
\end{eqnarray}
\end{theorem}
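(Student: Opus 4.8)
The plan is to kill the degeneracy of the weight $\varrho$ by an elliptic regularisation, prove energy estimates for the regularised problems that are \emph{uniform} in the regularisation parameter, and then recover a solution of (\ref{E2-1R0})--(\ref{E2-2}) by compactness, so that (\ref{E2-32}) is inherited in the limit. Concretely, for $\delta\in(0,1)$ I would replace $\varrho$ by $\varrho_\delta:=\varrho+\delta$, which by (\ref{E2-15R2}) satisfies $\delta\le\varrho_\delta\le2$, and consider
\[
\partial_{tt}h_\delta-\varrho_\delta\,\partial_a(\rho_{ab}\partial_b h_\delta)+B\,\partial_a h_\delta-\varepsilon f(Du,Dv)\int_0^t\partial_a h_\delta=g,\qquad h_\delta(0)=h_0,\ \ \partial_t h_\delta(0)=h_1.
\]
Since $\varrho_\delta\rho_{ab}\xi_a\xi_b\ge\delta\rho_0|\xi|^2$ by (\ref{E2-15}), the principal part is strictly hyperbolic with $\textbf{C}^k$ coefficients, while the first--order term and the Volterra--type nonlocal term are bounded perturbations; hence, for each fixed $\delta$, a unique solution $h_\delta\in\textbf{C}_T^{k-1}$ is produced by the classical theory of strictly hyperbolic equations, combined with a Picard/Galerkin step for the nonlocal term in the spirit of \cite{Allen1}. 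The point is then to bound $h_\delta$ in a norm independent of $\delta$.

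The core of the argument is the $\delta$--uniform energy estimate. I would work with $E_0(t):=\tfrac12\int_M\big(|\partial_t h_\delta|^2+\varrho_\delta\rho_{ab}\partial_a h_\delta\,\partial_b h_\delta+|h_\delta|^2\big)\sqrt{w}$, multiply the regularised equation by $\partial_t h_\delta$, integrate over $M$ and integrate by parts in $x$. Besides $\frac{d}{dt}E_0$, this produces: a term $\tfrac12\int\varrho_\delta(\partial_t\rho_{ab})\partial_a h_\delta\,\partial_b h_\delta$ bounded by $CE_0$ via the second half of (\ref{E2-15}); a term $\int(\partial_a\varrho)\rho_{ab}\,\partial_b h_\delta\,\partial_t h_\delta$ bounded by $CE_0$ because $\varrho\ge0$ and $\varrho\in\textbf{C}^2$ force the Glaeser bound $|\partial_a\varrho|\le(2\|D^2\varrho\|_{\textbf{L}^\infty})^{1/2}\varrho^{1/2}$, so that $|\partial_a\varrho|\,|\partial h_\delta|\,|\partial_t h_\delta|\le C(\varrho|\partial h_\delta|^2+|\partial_t h_\delta|^2)$; the genuine first--order term $\int B\,\partial_a h_\delta\,\partial_t h_\delta$, bounded by $CE_0$ because the Levi condition (\ref{E2-15R1}), combined with the elementary bound $|\varrho\partial_a\rho_{ab}|\le\rho_3\varrho$ from (\ref{E2-15}) and $0\le\varrho\le1$, forces $|B|\le C\varrho^{1/2}$, i.e.\ $B$ vanishes like $\varrho^{1/2}$ on the degeneracy set; the nonlocal term $\varepsilon\int f(Du,Dv)\big(\int_0^t\partial_a h_\delta\big)\partial_t h_\delta$, handled using $|||f|||_{s,T}\le C$ on $\textbf{B}_{R,T}^k$, the representation $\partial_a h_\delta(t)=\partial_a h_0+\int_0^t\partial_a\partial_\tau h_\delta\,d\tau$ (which trades time regularity for the missing \emph{unweighted} spatial regularity) and Cauchy--Schwarz; and finally $\int g\,\partial_t h_\delta\le\|g\|_{\textbf{L}^2}\|\partial_t h_\delta\|_{\textbf{L}^2}$. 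Gronwall's inequality then gives $E_0(t)\le C(\|h_0\|_{\textbf{H}^1}^2+\|h_1\|_{\textbf{L}^2}^2+\sup_{[0,T]}\|g\|_{\textbf{L}^2}^2)$ with $C$ depending on $T$ but not on $\delta$ (nor, for $\varepsilon$ small, on $\varepsilon$). Applying the same scheme to $\partial^\alpha h_\delta$ for $1\le|\alpha|\le s-1$ brings in the commutators $[\partial^\alpha,\varrho_\delta\rho_{ab}\partial_a\partial_b]h_\delta$; the contributions carrying exactly one derivative on $\varrho_\delta$ are again absorbed by Glaeser into the $s$--th order energy $E_s$, while those carrying two or more derivatives on $\varrho_\delta$ (which are \emph{not} dominated by $\varrho_\delta$), together with the nonlocal contributions, are routed onto the lower energies $E_{s-1},\dots,E_1$, so the estimates close by a simultaneous induction on $s$ and Gronwall, yielding (\ref{E2-32}) for $h_\delta$ uniformly in $\delta$ and all $1\le s\le k-1$.

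With the uniform bounds in hand, I would extract a subsequence with $h_\delta\rightharpoonup h$ weakly--$*$ in $\textbf{L}^\infty([0,T];\textbf{H}^{k-1})$; the regularised equation also bounds $\partial_{tt}h_\delta$ uniformly in a space of one less spatial order, so by Aubin--Lions $h_\delta\to h$ strongly in $\textbf{C}([0,T];\textbf{H}^{k-1-\theta})$ for every $\theta>0$. This suffices to pass to the limit in $\varrho_\delta\partial_a(\rho_{ab}\partial_b h_\delta)\to\varrho\partial_a(\rho_{ab}\partial_b h)$, in $B\partial_a h_\delta\to B\partial_a h$, in the nonlocal term and in the initial data; hence $h\in\textbf{H}^{k-1}$ solves (\ref{E2-1R0})--(\ref{E2-2}), and letting $\delta\to0$ in the estimate of the previous step, together with weak lower semicontinuity of the norms, gives (\ref{E2-32}). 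If uniqueness is wanted it follows by applying the same energy inequality to the difference of two solutions with $g\equiv0$ and zero initial data.

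The step I expect to be the real obstacle is the $\delta$--uniform energy estimate, and within it two points. First, the genuine first--order term $B\,\partial_a h$ cannot be controlled at all near $\{\varrho=0\}$ without the Levi condition (\ref{E2-15R1}): that hypothesis is precisely what forces $B$ to vanish there at order $\varrho^{1/2}$ (the abstract analogue of (\ref{E2-38})--(\ref{E2-39}) in the membrane setting). Second, at the higher orders the derivatives of $\varrho$ of order $\ge2$ are \emph{not} dominated by $\varrho$, so the commutator terms they generate can only be handled by a careful cascade onto the lower--order energies; keeping this cascade consistent with the trading of time regularity for spatial regularity forced by the nonlocal term is the delicate bookkeeping, and the time integrations there are also what produce a Gronwall constant that depends on $T$ but is independent of $\delta$ and of $\varepsilon$.
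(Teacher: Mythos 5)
Your skeleton coincides with the paper's: regularize $\varrho$ to $\varrho+\delta$, solve the resulting strictly hyperbolic problem (the paper rewrites it as the first--order system (\ref{E2-33})), prove energy estimates, and pass to the limit $\delta\to 0$. The divergence is in how the estimates are produced, and that is where your sketch has a genuine gap. Multiplying by $\partial_t h_\delta$ gives an energy whose time--derivative and zero--order parts are unweighted but whose top--order spatial part is $\varrho_\delta\rho_{ab}\partial_a h_\delta\partial_b h_\delta$, i.e.\ only $\|\varrho^{1/2}D^{s+1}h_\delta\|_{\textbf{L}^2}$ is controlled at stage $s$. But (\ref{E2-32}) is an \emph{unweighted} estimate, and unweighted top--order control is needed inside the estimate itself: the nonlocal term at order $s$ contributes $\varepsilon\int f\bigl(\int_0^t D^{s+1}h_\delta\bigr)\partial_t D^{s}h_\delta$, which requires a bound on $\|D^{s+1}h_\delta\|_{\textbf{L}^2}$ without the weight. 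Your remedy of writing $D^{s+1}h_\delta(t)=D^{s+1}h_0+\int_0^t\partial_\tau D^{s+1}h_\delta\,d\tau$ pushes the problem up one order (it needs the time--derivative part of the order--$(s+1)$ energy), so the "simultaneous induction" cascades upward rather than downward and cannot close at the top order $s=k-1$ with data $(h_0,h_1)\in\textbf{H}^{k-1}\times\textbf{H}^{k-2}$ and $\textbf{C}^k$ coefficients; the alternative of absorbing the unweighted term into the weighted energy costs a factor $\delta^{-1}$, destroying exactly the $\delta$--uniformity you are claiming. So as written your Gronwall argument yields only a degenerate (weighted) analogue of (\ref{E2-32}), which suffices neither for the stated estimate nor for extracting an $\textbf{H}^{k-1}$ limit.

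The paper's device for precisely this point is the step you are missing: it runs the energy identity with the multiplier weighted by $\varphi=\varrho^{-1}$ (Lemmas 2.3--2.4), so the weight in front of the elliptic part cancels and the gradient energy becomes unweighted, and it neutralizes the resulting singular weight on the data and source by first subtracting the auxiliary function $\tilde h$ solving $\partial_{tt}\tilde h-\partial_t\tilde h-\tilde h=g$ with the same Cauchy data, see (\ref{E2-18})--(\ref{E2-26}); the rewriting (\ref{E2-20}) is what avoids a loss of derivatives, and the higher orders are handled through the system (\ref{E2-27R1}), whose lower--order coefficients again satisfy a Levi condition. Two smaller remarks: the Levi condition (\ref{E2-15R1}) gives $|B|\le C\varrho$ directly, so your weaker claim $|B|\le C\varrho^{1/2}$ is unnecessary (and your use of Glaeser's inequality for $|\partial_a\varrho|\lesssim\varrho^{1/2}$ is a reasonable substitute for the paper's stronger hypothesis $|\partial_a\gamma_0|\le c_0\gamma_0$ in (\ref{E2-38})); and your insistence on $\delta$--independent constants is the right instinct --- the paper itself only records a $\delta$--dependent constant in (\ref{E2-34}) before passing to the limit --- but making it work requires the weighted--multiplier/auxiliary--function machinery (or an equivalent source of unweighted top--order control), not the plain $\partial_t h_\delta$ energy.
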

Before giving the proof of above theorem, we need to carry out some priori estimates on the solution $h$ of equation (\ref{E2-1R0}).
\begin{lemma}
Let $h$ be a $\textbf{H}^2$-solution to (\ref{E2-1R0}) with initial data (\ref{E2-2}). Assume that (\ref{E2-15})-(\ref{E2-15R1}) holds. Then
 there exists a nonnegative $C^1$ function $\varphi$ in $[0,T]\times M$ satisfying
\begin{eqnarray}\label{E2-3}
\frac{\partial_t\varphi}{\varphi}\leq c_8,~~\frac{|D\varphi|}{\varphi}\varrho\rho\leq c_9.
\end{eqnarray}
For any $\lambda>c_{10}$, there holds
\begin{eqnarray}\label{E2-4}
(\lambda-c_{10})\int_{0}^T\int_{M}e^{-\lambda t}\varphi(h_t^2+\varrho|Dh|^2+h^2)\sqrt{w}
&\leq&\int_{M}e^{-\lambda t}\varphi(h_1^2+\varrho|Dh_0|^2+h_0^2)\sqrt{w}\nonumber\\
&&+\int_{0}^T\int_{M}e^{-\lambda t}\varphi(\varepsilon f^2(\int_0^{t}\partial_ah)^2+g^2)\sqrt{w}.~~~~~~
\end{eqnarray}
\end{lemma}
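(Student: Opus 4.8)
The plan is to establish a weighted energy inequality for $\textbf{H}^2$-solutions of the toy model (\ref{E2-1R0}) by the classical method of multiplying the equation by $e^{-\lambda t}\varphi h_t$ and integrating over $[0,T]\times M$, where the weight $\varphi$ is chosen precisely so that the ``bad'' term coming from the degenerate principal part can be absorbed. First I would fix the weight: since the Levi condition (\ref{E2-15R1}) only controls $B-\varrho\partial_a\rho_{ab}$ by $c_5\varrho$, the natural choice is to take $\varphi$ solving a transport-type inequality adapted to the degenerate coefficient $\varrho\rho_{ab}$, so that (\ref{E2-3}) holds, i.e. $\partial_t\varphi/\varphi\le c_8$ and $\varrho\rho|D\varphi|/\varphi\le c_9$; one can build such a $\varphi$ explicitly, e.g. $\varphi=\exp(\psi)$ with $\psi$ bounded together with its $\varrho\rho$-weighted spatial gradient. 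This is the device that replaces the usual Gronwall argument in the non-degenerate case.

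Next I would carry out the energy identity. Multiply (\ref{E2-1R0}) by $e^{-\lambda t}\varphi h_t\sqrt{w}$ and integrate. The term $\partial_{tt}h\cdot e^{-\lambda t}\varphi h_t$ gives $\frac12\partial_t(e^{-\lambda t}\varphi h_t^2)$ plus $(\lambda-\partial_t\varphi/\varphi)\cdot\frac12 e^{-\lambda t}\varphi h_t^2$, which under (\ref{E2-3}) produces the good coercive term $(\lambda-c_8)\int e^{-\lambda t}\varphi h_t^2$. For the principal part, integrating by parts in $x$ moves one derivative onto $\varphi h_t$: the piece landing on $h_t$ reconstructs $\frac12\partial_t(e^{-\lambda t}\varphi\,\varrho\rho_{ab}\partial_ah\partial_bh)$ up to the commutator terms $e^{-\lambda t}(\lambda\varphi-\partial_t\varphi)\varrho\rho_{ab}\partial_ah\partial_bh$ and $e^{-\lambda t}\varphi\,\varrho\,\partial_t\rho_{ab}\partial_ah\partial_bh$, while the piece landing on $D\varphi$ is controlled by $\varrho\rho|D\varphi|/\varphi\cdot e^{-\lambda t}\varphi|\partial h|^2\le c_9 e^{-\lambda t}\varphi|\partial h|^2$; here (\ref{E2-15}) makes $\varrho\rho_{ab}\partial_ah\partial_bh$ comparable to $\varrho|\partial_ah|^2$ and bounds the $\partial_t\rho_{ab}$ term. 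The crucial term is $B(t,x)\partial_ah\cdot e^{-\lambda t}\varphi h_t$: I would split $B\partial_ah=(B-\varrho\partial_a\rho_{ab})\partial_bh+\varrho\partial_a\rho_{ab}\partial_bh$, estimate the first summand by the Levi condition (\ref{E2-15R1}) as $c_5\varrho|\partial h|^2$, and integrate by parts (in $x$) the second summand into the principal-part energy. The zeroth-order $h^2$ term is added artificially by also using the obvious identity $\frac12\partial_t(e^{-\lambda t}\varphi h^2)$-type bookkeeping with $h_t h$, and the lower-order integral-in-time term $\varepsilon f(Du,Dv)\int_0^t\partial_ah$ together with $g$ are simply moved to the right by Cauchy-Schwarz, producing exactly the right-hand side of (\ref{E2-4}). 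Collecting all constants into a single $c_{10}$ and choosing $\lambda>c_{10}$ yields (\ref{E2-4}).

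The main obstacle, as always for degenerate hyperbolic equations of this Colombini--Spagnolo type, is controlling the first-order term $B\partial_ah$: without a structural (Levi) assumption this term is genuinely of the same strength as the (degenerating) principal part and would destroy well-posedness, so the entire argument hinges on using (\ref{E2-15R1}) to write $B\partial_ah$ as a Levi-admissible perturbation plus a divergence-form term that merges into the degenerate energy. The second delicate point is that the spatial energy only controls $\varrho|\partial_ah|^2$, not $|\partial_ah|^2$, so every integration by parts in $x$ must keep the factor $\varrho$ attached; the compatibility of the weight $\varphi$ with $\varrho\rho$ in (\ref{E2-3}) is what makes this bookkeeping close. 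I would also remark that regularity of $h$ as an $\textbf{H}^2$-solution is exactly what is needed to justify the integrations by parts and the pointwise manipulations above; the passage from this a priori estimate to actual existence of the solution (and to the higher-order estimate (\ref{E2-32})) is deferred to the proof of Theorem 2.2, where one differentiates the equation, applies Lemma 2.3 to each $D^\alpha h$ with $|\alpha|\le s-1$, and sums.
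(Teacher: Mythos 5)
Your plan coincides with the paper's own proof: there, too, one multiplies (\ref{E2-1R0}) by $2e^{-\lambda t}\varphi h_t$, adds the identity for $\partial_t(e^{-\lambda t}\varphi h^2)$ to generate the zeroth-order term, uses (\ref{E2-3}) to absorb the $\partial_t\varphi$ and $D\varphi$ contributions and the Levi condition (\ref{E2-15R1}) to absorb the first-order term $(B-\varrho\partial_a\rho_{ab})\partial h\,h_t$ after recombining the principal part in divergence form, handles the $\varepsilon f\int_0^t\partial_a h$ and $g$ terms by Cauchy's inequality, and then integrates over $[0,T]\times M$ with $\lambda>c_{10}$, discarding the positive boundary term at $t=T$. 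Your grouping of $B\partial_a h$ with $\varrho\partial_a\rho_{ab}\partial_b h$ is the same bookkeeping as the paper's, so the argument is essentially identical.
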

\begin{proof}
Taking the inner product of the linearized equation (\ref{E2-1R0}) with $2e^{-\lambda t}\varphi h_t$, we have
\begin{eqnarray}\label{E2-5}
\partial_t[e^{-\lambda t}\varphi(h_t^2+\varrho\rho^{ab}\partial_ah\partial_bh)]&+&\lambda e^{-\lambda t}\varphi(h_t^2+\varrho\rho^{ab}\partial_ah\partial_bh)-2e^{-\lambda t}\partial_a(\varrho\rho^{ab}\varphi h_t\partial_bh)\nonumber\\
&=&e^{-\lambda t}\varphi_th_t^2+e^{-\lambda t}(\partial_t\varphi)\varrho\rho^{ab}\partial_ah\partial_bh
+e^{-\lambda t}\varphi\varrho(\partial_t\rho^{ab})\partial_ah\partial_bh\nonumber\\
&&-2e^{-\lambda t}(\partial_a\varphi)\varrho\rho^{ab}h_t\partial_bh-2e^{-\lambda t}\varphi(B-\varrho\partial_a\rho^{ab})\partial_bhh_t\nonumber\\
&&-2e^{-2\lambda t}(\partial_a\varrho)\rho^{ab}\varphi h_t\partial_bh+2\varepsilon e^{-\lambda t}\varphi h_tf\int_{0}^{t}\partial_ah+2e^{-\lambda t}\varphi h_tg.~~~~~~~
\end{eqnarray}
Here one should notice the Levi condition $(\ref{E2-15R1})$ to overcome the influence of the term $-2e^{-2\lambda t}(\partial_a\varrho)\rho^{ab}\varphi h_t\partial_bh$. One can see $(\ref{E2-7})$ and (\ref{E2-8}) for more details on this point.

Note that
\begin{eqnarray}\label{E2-6}
\partial_t(e^{-\lambda t}\varphi h^2)+\lambda e^{-\lambda t}\varphi h^2=\varphi_te^{-\lambda t}h^2+2e^{-\lambda t}\varphi hh_t.
\end{eqnarray}
Summing up (\ref{E2-5})-(\ref{E2-6}), we get
\begin{eqnarray}\label{E2-7}
\partial_t[e^{-\lambda t}\varphi(h_t^2+\varrho\rho^{ab}\partial_ah\partial_bh+h^2)]&+&\lambda e^{-\lambda t}\varphi(h_t^2+\varrho\rho^{ab}\partial_ah\partial_bh+h^2)-2e^{-\lambda t}\partial_a(\varrho\rho^{ab}\varphi h_t\partial_bh)\nonumber\\
&=&e^{-\lambda t}\varphi_t(h_t^2+h^2+2hh_t)+e^{-\lambda t}\varphi_t\varrho\rho^{ab}\partial_ah\partial_bh\nonumber\\
&&+e^{-\lambda t}\varphi\varrho(\partial_t\rho^{ab})\partial_ah\partial_bh-2e^{-\lambda t}\partial_a\varphi\varrho\rho^{ab}h_t\partial_bh\nonumber\\
&&-2e^{-\lambda t}\varphi(B-\partial_a(\varrho\rho^{ab}))\partial_bhh_t+2\varepsilon e^{-\lambda t}\varphi h_tf\int_{0}^{t}\partial_ah\nonumber\\
&&+2e^{-\lambda t}\varphi h_tg.~~~~~~
\end{eqnarray}
Using Cauchy inequality, by (\ref{E2-15}), (\ref{E2-15R1}), (\ref{E2-3}) and (\ref{E2-7}), we derive
\begin{eqnarray}\label{E2-8}
\partial_t[e^{-\lambda t}\varphi(h_t^2+\varrho\rho^{ab}\partial_ah\partial_bh+h^2)]&+&\lambda e^{-\lambda t}\varphi(h_t^2+\varrho\rho^{ab}\partial_ah\partial_bh+h^2)-2e^{-\lambda t}\partial_a(\varrho\rho^{ab}\varphi h_t\partial_bh)\nonumber\\
&\leq&(2\frac{\varphi_t}{\varphi}+\frac{|\partial_a\varphi|}{\varphi}\varrho\rho^{ab}+|B-\partial_a(\varrho\rho^{ab})|+2)e^{-\lambda t}\varphi h_t^2\nonumber\\
&&+(\frac{\varphi_t}{\varphi}\rho^{ab}+\partial_t\rho^{ab})\varrho e^{-\lambda t}\varphi\partial_ah\partial_bh\nonumber\\
&&+(\frac{|\partial_b\varphi|}{\varphi}\varrho\rho^{ab}+|B-\partial_a(\varrho\rho^{ab})|)e^{-\lambda t}\varphi(\partial_bh)^2\nonumber\\
&&+2\frac{\varphi_t}{\varphi}e^{-\lambda t}\varphi h^2+e^{-\lambda t}\varphi f^2(\int_0^{t}\partial_ah)^2+e^{-\lambda t}\varphi g^2\nonumber\\
&\leq&c_{10} e^{-\lambda t}\varphi(h_t^2+\varrho\rho^{ab}\partial_ah\partial_bh+h^2)\nonumber\\
&&+\varepsilon e^{-\lambda t}\varphi f^2(\int_0^{t}\partial_ah)^2+e^{-\lambda t}\varphi g^2.
\end{eqnarray}
Choosing $\lambda>c_{10}$, then inequality (\ref{E2-8}) leads to
\begin{eqnarray*}
\partial_t[e^{-\lambda t}\varphi(h_t^2+\varrho\rho^{ab}\partial_ah\partial_bh+h^2)]&+&(\lambda-c_{10})e^{-\lambda t}\varphi(h_t^2+\varrho\rho^{ab}\partial_ah\partial_bh+h^2)\nonumber\\
&&-2e^{-\lambda t}\partial_a(\varrho\rho^{ab}\varphi h_t\partial_bh)\nonumber\\
&\leq&\varepsilon e^{-\lambda t}\varphi f^2(\int_0^{t}\partial_ah)^2+e^{-\lambda t}\varphi g^2.
\end{eqnarray*}
Thus we obtain
\begin{eqnarray*}
\int_{M}e^{-\lambda t}\varphi(h_t^2&+&\varrho\rho^{ab}\partial_ah\partial_bh+h^2)(T)\sqrt{w}\nonumber\\
&+&(\lambda-c_{10})\int_0^T\int_Me^{-\lambda t}\varphi(h_t^2+\varrho\rho^{ab}\partial_ah\partial_bh+h^2)\sqrt{w}\nonumber\\
&\leq&\int_{M}e^{-\lambda t}\varphi(h_1^2+\varrho\rho^{ab}\partial_ah_0\partial_bh_0+h_0^2)\sqrt{w}\nonumber\\
&&+\int_0^T\int_Me^{-\lambda t}\varphi(\varepsilon f^2(\int_0^{t}\partial_ah)^2+g^2)\sqrt{w},
\end{eqnarray*}
which combining with (\ref{E2-15}) gives (\ref{E2-4}). This completes the proof.
\end{proof}
To derive classical energy estimates of solutions by eliminating the weight $\varphi$ in Lemma 1, we need to assume that
\begin{eqnarray}\label{E2-16}
0<\varrho(x)\leq 1,~~x\in M.
\end{eqnarray}
Then we have the following result by taking $\varphi(t,x)=\varrho(x)^{-1}$ in Lemma 1 for $\forall (t,x)\in[0,T]\times M$.
\begin{lemma}
Let $h$ be a $\textbf{H}^2$-solution to (\ref{E2-1R0}) with initial data (\ref{E2-2}). Assume that (\ref{E2-15}), (\ref{E2-15R1}) and (\ref{E2-16}) hold.
Then for any $\lambda>c_{11}$, there holds
\begin{eqnarray}\label{E2-17}
(\lambda-c_{11})\int_0^T\int_Me^{-\lambda t}(\varrho^{-1}h_t^2&+&|Dh|^2+\varrho^{-1}h^2)\sqrt{w}\nonumber\\
&\leq&\int_{M}e^{-\lambda t}(\varrho^{-1}h_1^2+|Dh_0|^2+\varrho^{-1}h_0^2)\sqrt{w}\nonumber\\
&&+\int_0^T\int_Me^{-\lambda t}\varrho^{-1}(\varepsilon f^2(\int_0^{t}\partial_ah)^2+g^2)\sqrt{w}.~~~~~
\end{eqnarray}
\end{lemma}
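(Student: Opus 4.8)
The plan is to obtain (\ref{E2-17}) as an immediate corollary of Lemma 2.3 by specializing the weight to $\varphi(t,x):=\varrho(x)^{-1}$, exactly as the sentence preceding the statement indicates. The only real task is to certify that this $\varphi$ is an admissible weight in the sense of Lemma 2.3; once this is done, (\ref{E2-17}) is a direct substitution into (\ref{E2-4}).

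\emph{Admissibility of $\varphi=\varrho^{-1}$.} By the strict positivity hypothesis (\ref{E2-16}), $\varrho$ is a positive $\textbf{C}^1$ function on the compact manifold $M$, so $\varphi=\varrho^{-1}$ is a well-defined nonnegative $\textbf{C}^1$ function on $[0,T]\times M$, which is exactly the type of weight Lemma 2.3 asks for. It remains to check the two bounds in (\ref{E2-3}). Since $\varrho$, hence $\varphi$, does not depend on $t$, one has $\partial_t\varphi\equiv0$, so $\partial_t\varphi/\varphi=0$ and the first bound holds with $c_8=0$. For the second, $D\varphi=-\varrho^{-2}D\varrho$ gives $\frac{|D\varphi|}{\varphi}\varrho\rho_{ab}=|D\varrho|\,\rho_{ab}$, which is bounded on $[0,T]\times M$ by $\|D\varrho\|_{\textbf{L}^{\infty}}\rho_1$ using the upper ellipticity bound in (\ref{E2-15}) together with the boundedness of $D\varrho$ on the compact $M$; this is the second bound in (\ref{E2-3}) with $c_9=\|D\varrho\|_{\textbf{L}^{\infty}}\rho_1$.

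\emph{Conclusion.} Substituting $\varphi=\varrho^{-1}$ into (\ref{E2-4}), the integrand $e^{-\lambda t}\varphi(h_t^2+\varrho\,\partial_ah\partial_bh+h^2)$ becomes $e^{-\lambda t}(\varrho^{-1}h_t^2+\partial_ah\partial_bh+\varrho^{-1}h^2)$ in both the spacetime integral on the left and the initial-slice integral on the right, while the forcing term becomes $e^{-\lambda t}\varrho^{-1}(\varepsilon f^2(\int_0^t\partial_ah)^2+g^2)$. Taking $c_{11}$ to be the constant $c_{10}$ furnished by Lemma 2.3 with this choice of $\varphi$ (it then depends on $\|D\varrho\|_{\textbf{L}^{\infty}}$, the ellipticity constants $\rho_0,\rho_1,\rho_2,\rho_3$ and the Levi constant $c_5$), one reads off exactly (\ref{E2-17}), valid for every $\lambda>c_{11}$. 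The argument involves no genuine obstacle; the only conceptual point worth stressing is that Lemma 2.3 is stated for a merely nonnegative $\textbf{C}^1$ weight, and it is precisely the upgrade from $\varrho\ge0$ (the standing hypothesis (\ref{E2-15R2})) to $\varrho>0$ in (\ref{E2-16}) that makes the singular choice $\varphi=\varrho^{-1}$ legitimate — without it, $\varrho^{-1}$ need not even be finite where $\varrho$ vanishes, and the classical ($\varphi$-free) energy estimate genuinely fails.
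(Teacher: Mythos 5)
Your proposal is correct and follows exactly the paper's route: the paper obtains this lemma by taking $\varphi(t,x)=\varrho(x)^{-1}$ in Lemma 2.3, which is precisely your substitution, and your verification of the admissibility conditions (\ref{E2-3}) for this weight (using $\partial_t\varphi=0$ and $\tfrac{|D\varphi|}{\varphi}\varrho\rho_{ab}=|D\varrho|\rho_{ab}$, finite thanks to (\ref{E2-16}) and compactness of $M$) is the only detail the paper leaves implicit.
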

Next we eliminate $\varrho^{-1}(x)$ in energy inequality (\ref{E2-17}).
\begin{lemma}
Let $h$ be a $\textbf{H}^2$-solution to (\ref{E2-1R0}) with initial data (\ref{E2-2}).
Assume that (\ref{E2-15}), (\ref{E2-15R1}) and (\ref{E2-16}) hold.
 Then for any $\lambda>\lambda_0$, there holds
\begin{eqnarray}\label{E2-26}
\lambda\int_0^T\int_Me^{-\lambda t}(h_t^2+(Dh)^2+h^2)\sqrt{w}
&\leq&c_{12}\int_{M}e^{-\lambda t}(h_0^2+h_1^2+(Dh_0)^2+(Dh_1)^2)\sqrt{w}\nonumber\\
&&+c_{12}\int_0^T\int_Me^{-\lambda t}(|g|^2+|Dg|^2)\sqrt{w}\nonumber\\
&&+c_{12}\varepsilon\int_0^T\int_Me^{-\lambda t}f^2(\int_0^{t}\partial_ah)^2\sqrt{w},
\end{eqnarray}
where $\lambda_0$ depends on $\rho_0$, $\rho_1$ and the $\textbf{C}^1$ norm of $B$.
\end{lemma}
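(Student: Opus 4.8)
Proof proposal (plan). The plan is to derive (\ref{E2-26}) from two applications of Lemma 2.3 with the trivial weight $\varphi\equiv1$ — one to $h$ itself and one to a spatial derivative $Dh$ — using $0<\varrho\le1$ and absorbing the degeneracy by taking $\lambda$ large, so that all inverse powers of $\varrho$ disappear from the right side.

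First I would apply Lemma 2.3 with $\varphi\equiv1$ (which satisfies (\ref{E2-3}) trivially) to $h$. Since $\varrho\rho_{ab}\partial_ah\partial_bh\ge0$ it may be dropped from the left of (\ref{E2-4}), and at $t=0$ it is bounded by $\rho_1|Dh_0|^2$ because $\varrho\le1$; this already gives
$$(\lambda-c_{10})\int_{[0,T]\times M}e^{-\lambda t}(h_t^2+h^2)\sqrt w\le\int_{\{0\}\times M}e^{-\lambda t}(h_1^2+\rho_1|Dh_0|^2+h_0^2)\sqrt w+\int_{[0,T]\times M}e^{-\lambda t}\bigl(\varepsilon f^2(\int_0^t\partial_ah)^2+g^2\bigr)\sqrt w,$$
i.e. the $h_t^2+h^2$ part of (\ref{E2-26}) with no inverse power of $\varrho$. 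It then remains to bound $\int e^{-\lambda t}|Dh|^2$ by a right side free of $\varrho^{-1}$, which is where the weighted estimate of Lemma 2.5 alone is insufficient. For this I would differentiate (\ref{E2-1R0}) once in space: with $H:=Dh$ one finds that $H$ solves an equation of exactly the form (\ref{E2-1R0}), $H_{tt}-\varrho\partial_a(\rho_{ab}\partial_bH)+B\partial_aH-\varepsilon f\int_0^t\partial_aH=\tilde g$, with $\tilde g=Dg-(DB)\partial_ah+\varepsilon(Df)\int_0^t\partial_ah+(D\varrho)\partial_a(\rho_{ab}\partial_bh)+\varrho\partial_a\bigl((D\rho_{ab})\partial_bh\bigr)$. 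Applying Lemma 2.3 with $\varphi\equiv1$ to $H$ requires a bound on $\int e^{-\lambda t}\tilde g^2$. The two commutator terms are the only ones carrying $\partial^2h$; expanding them and using the Levi-type inequality $|D\varrho|\le c\varrho$ (the analogue for the toy model of (\ref{E2-38}), automatic in the membrane application by (\ref{E2-36})--(\ref{E2-39})), together with the $\textbf{C}^k$-bounds on $\rho_{ab}$ from (\ref{E2-15}) and the $\textbf{C}^1$-norm of $B$, one obtains the pointwise estimate $\tilde g^2\le C\bigl(|Dg|^2+|Dh|^2+\varepsilon^2(Df)^2(\int_0^t\partial_ah)^2\bigr)+C\varrho|DH|^2$. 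The decisive point is that $\varrho|DH|^2\le\rho_0^{-1}\varrho\rho_{ab}\partial_aH\partial_bH$, so the contribution $C\int e^{-\lambda t}\varrho\rho_{ab}\partial_aH\partial_bH$ coming from $\tilde g^2$ can be moved to the left and absorbed into $(\lambda-c_{10})\int e^{-\lambda t}\varrho\rho_{ab}\partial_aH\partial_bH$ once $\lambda>c_{10}+C\rho_0^{-1}$, and likewise $C\int e^{-\lambda t}|Dh|^2=C\int e^{-\lambda t}H^2$ is absorbed into $(\lambda-c_{10})\int e^{-\lambda t}H^2$. The boundary term $\varrho\rho_{ab}(0)\partial_aDh_0\partial_bDh_0$ generated at $t=0$ is reduced by the same integration-by-parts-and-absorb device built on $|D\varrho|\le c\varrho$ and $\varrho\le1$, while $H_t^2$ and $H^2$ at $t=0$ are exactly $|Dh_1|^2$ and $|Dh_0|^2$; the differentiated nonlocal terms $\varepsilon(Df)^2(\int_0^t\partial_ah)^2$ and $\varepsilon f^2(\int_0^t\partial_aDh)^2$ are folded into the stated $\varepsilon f^2(\int_0^t\partial_ah)^2$ using that $f$ and $Df$ are controlled by constants on $\textbf{B}_{R,T}^k$. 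Adding the bound so obtained for $\int e^{-\lambda t}|Dh|^2$ to the first estimate and relabelling constants gives (\ref{E2-26}), with $\lambda_0$ fixed by $c_{10}$ and the absorption thresholds, hence by $\rho_0$, $\rho_1$ and the $\textbf{C}^1$-norm of $B$ only.

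The step I expect to be the main obstacle is precisely this absorption. Differentiating the \emph{degenerate} principal operator $\varrho\partial_a(\rho_{ab}\partial_b\cdot)$ unavoidably produces terms that at first sight are genuine, non-degenerate second derivatives of $h$; these cannot be controlled through the weighted inequality of Lemma 2.5 without reintroducing $\varrho^{-1}$ on the right, nor by a naive Gronwall argument with constants independent of how small $\varrho$ is. The resolution is that the Levi structure $|D\varrho|\lesssim\varrho$ forces every such commutator to carry an extra factor of $\varrho$, so that it becomes comparable to the principal energy density $\varrho\rho_{ab}\partial_aH\partial_bH$ appearing on the left of the \emph{unweighted} energy identity, and absorbing it costs only an enlargement of $\lambda$ by a constant depending on $\rho_0$ and the Levi and derivative bounds — which is exactly the uniformity asserted in the statement. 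The only other point needing care is the analogous reduction of the $t=0$ boundary term and of the differentiated nonlocal terms, which is routine once this $|D\varrho|\lesssim\varrho$ mechanism is in hand.
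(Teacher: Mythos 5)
Your route (apply the unweighted estimate of Lemma 2.3 to $h$ and to $H=Dh$, controlling the commutators through $|D\varrho|\le c\varrho$) is genuinely different from the paper's, but as written it does not prove (\ref{E2-26}); it proves a weaker estimate with a loss of one derivative. The decisive obstruction is the $t=0$ boundary term: integrating the energy identity for $H$ over $[0,T]\times M$ puts $\int_{\{0\}\times M}\varrho\,\rho_{ab}\partial_aDh_0\,\partial_bDh_0\sqrt{w}$ on the right of (\ref{E2-4}), and this is a genuine second-order quantity in $h_0$ which cannot be bounded by the first-order data terms allowed in (\ref{E2-26}) (take $\varrho\equiv1$, $\rho_{ab}=\delta_{ab}$: it is $\|D^2h_0\|_{\textbf{L}^2}^2$, not controlled by $\|h_0\|_{\textbf{H}^1}$). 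Your proposed ``integration-by-parts-and-absorb'' disposal of it cannot work: integrating by parts only raises the order of differentiation on $h_0$, and there is nothing on the left of the spacetime inequality into which a fixed $t=0$ quantity can be absorbed. A second, independent gap of the same nature is the nonlocal term: differentiating the equation produces $\varepsilon f\int_0^t\partial_aH$, hence $\varepsilon f^2(\int_0^tD^2h)^2$ on the right, and this cannot be ``folded into'' the stated term $\varepsilon f^2(\int_0^t\partial_ah)^2$ merely because $f$ and $Df$ are bounded (the derivative counts differ), nor absorbed into the left-hand gradient energy, since that energy carries the degenerate weight $\varrho$ while the nonlocal term does not; smallness of $\varepsilon$ does not help on the set where $\varrho$ vanishes. (A further, more minor point: $|D\varrho|\le c\varrho$ is not among the stated hypotheses (\ref{E2-15}), (\ref{E2-15R1}), (\ref{E2-16}), although it is what the intended application (\ref{E2-38}) provides.)

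The paper avoids exactly these two losses by a different device, which is the real content of the lemma: introduce $\tilde h$ solving the non-degenerate auxiliary problem (\ref{E2-18}) with the \emph{original} Cauchy data, and work with $\hat h=h-\tilde h$, which has zero data. Then the singular weight $\varphi=\varrho^{-1}$ can be used in the energy identity at no cost at $t=0$, and since $\varrho^{-1}\ge1$ the weighted energy dominates the unweighted quantities $\hat h_t^2+(D\hat h)^2+\hat h^2$, giving the unweighted $(Dh)^2$ on the left without ever differentiating the degenerate operator; the source $\hat g$, which contains second derivatives of $\tilde h$, is handled by the integration-by-parts identity (\ref{E2-20}) so that only $D\tilde h$, $\tilde h_t$, $\tilde h$ appear, and these are estimated from the auxiliary equation and its spatial derivative, producing only $h_0,h_1,Dh_0,Dh_1,g,Dg$ on the right, as claimed in (\ref{E2-26}). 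If you want to salvage your scheme, you would have to either accept $D^2h_0$ and $\int_0^tD^2h$ on the right (which is not the lemma, and is what Lemma 2.6 later handles by induction and $\varepsilon T^2$-absorption at the level of the full norms), or incorporate an auxiliary subtraction of the above type to kill the data and shift the derivative onto a non-degenerate problem.
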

\begin{proof}
we introduce an auxiliary function $\tilde{h}$, which satisfies
\begin{eqnarray}\label{E2-18}
&&\partial_{tt}\tilde{h}-\partial_t\tilde{h}-\tilde{h}=g,~~x\in M,\\
&&\tilde{h}(0,x)=h_0,~~\partial_t\tilde{h}(0,x)=h_1.\nonumber
\end{eqnarray}

Let
\begin{eqnarray*}
\hat{h}=h-\tilde{h},
\end{eqnarray*}
then it follows from (\ref{E2-1R0}) and (\ref{E2-18}) that
\begin{eqnarray}\label{E2-1R2}
&&\partial_{tt}\hat{h}-\varrho(x)\partial_a(\rho^{ab}(t,x)\partial_b\hat{h})+B(t,x)\partial_a\hat{h}-\varepsilon f\int_0^{t}\partial_ah=\hat{g},~~x\in M,\\
&&\hat{h}(0,x)=0,~~\partial_t\hat{h}(0,x)=0,~~x\in M,
\end{eqnarray}
where
\begin{eqnarray}\label{E2-19}
\hat{g}=\varrho(x)\partial_a(\rho^{ab}(t,x)\partial_b\tilde{h})+\partial_t\tilde{h}+\tilde{h}-B(t,x)\partial_a\tilde{h}.
\end{eqnarray}
Using the similar deriving process with (\ref{E2-7}) and taking $\varphi=\varrho^{-1}$, by (\ref{E2-1R2}) we get
\begin{eqnarray}\label{E2-7R1}
\partial_t[e^{-\lambda t}(\varrho^{-1}\hat{h}_t^2+\rho^{ab}\partial_a\hat{h}\partial_b\hat{h}&+&\varrho^{-1}\hat{h}^2)]+\lambda e^{-\lambda t}(\varrho^{-1}\hat{h}_t^2+\rho^{ab}\partial_a\hat{h}\partial_b\hat{h}+\varrho^{-1}\hat{h}^2)\nonumber\\
&&-2e^{-\lambda t}\partial_a(\rho^{ab}\hat{h}_t\partial_b\hat{h})\nonumber\\
&=&-2e^{-\lambda t}\partial_a\varrho^{-1}\varrho\rho^{ab}\hat{h}_t\partial_b\hat{h}
-2e^{-\lambda t}(-\varrho^{-1}B+\partial_a\rho^{ab})\partial_b\hat{h}\hat{h}_t\nonumber\\
&&+e^{-\lambda t}(\partial_t\rho^{ab})\partial_ah\partial_bh-2\varepsilon e^{-\lambda t}\varrho^{-1}\hat{h}_tf\int_{0}^{t}\partial_ah+2e^{-\lambda t}\varrho^{-1}\hat{h}_t\hat{g}.~~~~~~
\end{eqnarray}
To avoid an extra loss of derivatives of integrating (\ref{E2-7R1}), by (\ref{E2-19}), we can rewrite the last term in (\ref{E2-7R1}) as
\begin{eqnarray}\label{E2-20}
e^{-\lambda t}\varrho^{-1}\hat{h}_t\hat{g}&=&\partial_a(e^{-\lambda t}\rho^{ab}\partial_b\tilde{h}\partial_t\hat{h})-\partial_t(e^{-\lambda t}\rho^{ab}\partial_b\tilde{h}\partial_a\hat{h})+e^{-\lambda t}\rho^{ab}\partial_a\hat{h}\partial_b\tilde{h}_t-\lambda e^{-\lambda t}\rho^{ab}\partial_a\hat{h}\partial_b\tilde{h}\nonumber\\
&&+e^{-\lambda t}(\partial_t\rho^{ab})\partial_a\tilde{h}\partial_a\hat{h}+e^{-\lambda t}\varrho^{-1}(\hat{h}_t\tilde{h}_t+\hat{h}_t\tilde{h})
-e^{-\lambda t}\varrho^{-1}\hat{h}_tB\partial_a\tilde{h}.~~~~~~
\end{eqnarray}
Inserting (\ref{E2-20}) into (\ref{E2-7R1}), using Cauchy inequality, (\ref{E2-15}), (\ref{E2-15R1}) and (\ref{E2-16}), then integrating in $[0,T]\times M$, we obtain
\begin{eqnarray}\label{E2-21}
\int_{M}e^{-\lambda t}(\varrho^{-1}\hat{h}_t^2&+&(D\hat{h})^2+\varrho^{-1}\hat{h}^2)(T)\sqrt{w}\nonumber\\
&+&(\lambda-c_{13})\int_0^T\int_Me^{-\lambda t}(\varrho^{-1}\hat{h}_t^2+(D\hat{h})^2+\varrho^{-1}\hat{h}^2)\sqrt{w}\nonumber\\
&\leq&\int_{M}e^{-\lambda t}(D\tilde{h})(T)^2\sqrt{w}\nonumber\\
&&+c_{14}\int_0^T\int_Me^{-\lambda t}((\partial_b\tilde{h}_t)^2+(D\tilde{h})^2+\tilde{h}_t^2+\tilde{h}^2)\sqrt{w}\nonumber\\
&&+c_{14}\varepsilon\int_0^T\int_Me^{-\lambda t}f^2(\int_0^{t}\partial_ah)^2\sqrt{w}.
\end{eqnarray}
It follows from (\ref{E2-16}) that
\begin{eqnarray}\label{E2-22}
\varrho^{-1}\hat{h}_t^2+(D\hat{h})^2+\varrho^{-1}\hat{h}^2\geq|h_t-\tilde{h}_t|^2+|Dh-D\tilde{h}|^2+|h-\tilde{h}|^2,
\end{eqnarray}
which combing with (\ref{E2-21}) gives that
\begin{eqnarray}\label{E2-23}
c_{15}\int_0^T\int_Me^{-\lambda t}(h_t^2+(Dh)^2+h^2)\sqrt{w}
&\leq&\int_{M}e^{-\lambda t}(\tilde{h}_t^2+(D\tilde{h})^2+\tilde{h}^2)(T)\sqrt{w}\nonumber\\
&&+c_{16}\int_0^T\int_Me^{-\lambda t}(\tilde{h}_t^2+(\partial_b\tilde{h}_t)^2+(D\tilde{h})^2+\tilde{h}^2)\sqrt{w}\nonumber\\
&&+c_{16}\varepsilon\int_0^T\int_Me^{-\lambda t}f^2(\int_0^{t}\partial_ah)^2\sqrt{w}.~~~~~
\end{eqnarray}
Multiplying (\ref{E2-18}) both side by $2e^{-\lambda t}\tilde{h}$ and integrating on $[0,T]\times M$, we have
\begin{eqnarray}\label{E2-24}
\int_{M}e^{-\lambda t}(\tilde{h}^2+(\partial_t\tilde{h})^2)(T)\sqrt{w}&+&\lambda\int_0^T\int_Me^{-\lambda t}(\tilde{h}^2+(\partial_t\tilde{h})^2)\sqrt{w}\nonumber\\
&\leq& c_{17}\int_{M}e^{-\lambda t}(h_0^2+h_1^2)\sqrt{w}\nonumber\\
&&+c_{17}\int_0^T\int_Me^{-\lambda t}|g|^2\sqrt{w}.~~~~~
\end{eqnarray}
Differentiating (\ref{E2-18}) with respect to $x$, by the similar process of getting (\ref{E2-24}), we derive
\begin{eqnarray}\label{E2-25}
\int_{M}e^{-\lambda t}((D\tilde{h})^2+(\partial_tD\tilde{h})^2)(T)\sqrt{w}&+&\lambda\int_0^T\int_Me^{-\lambda t}((D\tilde{h})^2+(\partial_tD\tilde{h})^2)\sqrt{w}\nonumber\\
&\leq& c_{18}\int_{M}e^{-\lambda t}(h_0^2+h_1^2+(Dh_0)^2+(Dh_1)^2)\sqrt{w}\nonumber\\
&&+c_{18}\int_0^T\int_Me^{-\lambda t}(|g|^2+|Dg|^2)\sqrt{w}.
\end{eqnarray}
Inequalities (\ref{E2-24})-(\ref{E2-25}) give the control of two terms in the right hand side of (\ref{E2-23}). Thus substituting (\ref{E2-24})-(\ref{E2-25}) into (\ref{E2-23}), we obtain (\ref{E2-26}). This completes the proof.
\end{proof}
In what follows, we plan to obtain the estimate for $|||h|||_{s,T}$ by considering the equations of time-space derivatives of $h$. For any multi-index $\alpha\in\textbf{Z}^2_+$ with $|\alpha|=s$, applying $D^{\alpha}$ to both sides of (\ref{E2-1R0}) to get
\begin{eqnarray}\label{E2-27}
\partial_{tt}D^{\alpha}h-\varrho\partial_a(\rho^{ab}\partial_bD^{\alpha}h)+B\partial_aD^{\alpha}h-\varepsilon f(Du,Dv)\int_0^{t}D^{\alpha+1}h=F_{\alpha},
\end{eqnarray}
where the nonlinear terms
\begin{eqnarray}\label{E2-27R0}
F_{\alpha}=D^{\alpha}g&+&\sum_{s_1+s_2=\alpha,~|s_1|\geq1}(D^{s_1}(\varrho\rho^{ab}))\partial_{a}\partial_bD^{s_2}h\nonumber\\
&&+\sum_{s_1+s_2=\alpha,~|s_1|\geq1}(D^{s_1}(\varrho\partial_a\rho^{ab}))\partial_bD^{s_2}h-\sum_{s_1+s_2=\alpha,~|s_1|\geq1}(D^{s_1}B)\partial_aD^{s_2}h\nonumber\\
&&+\varepsilon\sum_{s_1+s_2=\alpha,~|s_1|\geq1}(D^{s_1}f(Du,Dv))\int_0^tDD^{s_2}h.~~~~~~
\end{eqnarray}
For convenience, we denote all spacial derivatives of $h$ of the order $s$ by a column vector of $m(s)$ components
\begin{eqnarray*}
u_s^T=(\partial_1^sh,\partial_1^{s-1}\partial_2h,\ldots,\partial_2^sh).
\end{eqnarray*}
It follows from putting together the equations corresponding to all $\alpha$ with $|\alpha|=s$ in (\ref{E2-27}) that
\begin{eqnarray}\label{E2-27R1}
\partial_{tt}u_s-\varrho\partial_a(\rho^{ab}\partial_bu_s)+B_s\partial_au_s-\varepsilon f_s(Du,Dv)\int_0^{t}Du_s=\tilde{F}_{\alpha},
\end{eqnarray}
where $B_s$, $f_s$ are $(m\times m)$-matrices and $\tilde{F}_{\alpha}$ is an $m$-vector given by
\begin{eqnarray*}
\tilde{F}_{\alpha}^T=(F_{(s,0,\ldots,0)},F_{(s-1,1,\ldots,0)},\ldots,F_{(0,0,\ldots,s)}).
\end{eqnarray*}
It is obviously that the following Levi condition holds
\begin{eqnarray*}
|B_s-\partial_a(\varrho\rho_{ab})\xi_a\xi_b|\leq c_{19}\varrho,~~x\in M.
\end{eqnarray*}
For $s\geq1$, we set
\begin{eqnarray*}
C_s=\sum_{a,b=1}^2(\|D^2(\varrho\rho^{ab})\|_s+\|D(\varrho\partial_a\rho^{ab}I)\|_s)+\|B_s\|_s.
\end{eqnarray*}

\begin{lemma}
Let $h$ be a $\textbf{H}^2$-solution to (\ref{E2-1R0}) with initial data (\ref{E2-2}).
Assume that (\ref{E2-15}), (\ref{E2-15R1}) and (\ref{E2-16}) holds.
 Then for sufficient small $\varepsilon>0$, there holds
\begin{eqnarray}\label{E2-28}
|||h|||_{s,T}\leq c_{20}\left(||h_0||_{s,T}+||h_1||_{s,T}+|||g|||_{s,T}\right).
\end{eqnarray}
\end{lemma}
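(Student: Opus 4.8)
The goal is to upgrade the $s=1$ a priori estimate obtained in Lemma~2.5 (inequality (\ref{E2-26})) to the full range $1\le s\le k-1$ by an induction on $s$, applied to the differentiated system (\ref{E2-27R1}). The key point is that (\ref{E2-27R1}) has exactly the same principal structure as the toy model (\ref{E2-1R0}) — the same degenerate operator $\partial_{tt}-\varrho\partial_a(\rho_{ab}\partial_b\cdot)$, a first-order term $B_s\partial_a u_s$ obeying the same Levi condition $|B_s-\varrho\partial_a\rho_{ab}I|\le c_{19}\varrho$, and a Volterra-type perturbation $-\varepsilon f_s\int_0^t Du_s$ — but now with an inhomogeneity $\tilde F_\alpha$ that contains the lower-order commutator terms. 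So the plan is: first run the weighted energy argument of Lemmas~2.3--2.5 verbatim for the vector system (\ref{E2-27R1}), which yields
\begin{eqnarray*}
\lambda\int_{[0,T]\times M}e^{-\lambda t}(|\partial_t u_s|^2+|Du_s|^2+|u_s|^2)\sqrt w
&\le& c\int_{\{0\}\times M}e^{-\lambda t}(|u_s(0)|^2+|\partial_t u_s(0)|^2+|Du_s(0)|^2)\sqrt w\\
&&+\,c\int_{[0,T]\times M}e^{-\lambda t}(|\tilde F_\alpha|^2+|D\tilde F_\alpha|^2)\sqrt w\\
&&+\,c\varepsilon\int_{[0,T]\times M}e^{-\lambda t}f_s^2\Big(\int_0^t D u_s\Big)^2\sqrt w,
\end{eqnarray*}
since all the structural hypotheses (\ref{E2-15}), (\ref{E2-15R1}), (\ref{E2-16}) are inherited. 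Note the initial data $u_s(0)$, $\partial_t u_s(0)$ are controlled by $\|h_0\|_s$, $\|h_1\|_s$ (the $D^\alpha h(0)$ are just derivatives of $h_0$, and $\partial_t D^\alpha h(0)=D^\alpha h_1$); the term $|Du_s(0)|$ costs one more spatial derivative, which is why the statement is $|||h|||_{s,T}\le c(|||h_0|||_{s,T}+|||h_1|||_{s,T}+|||g|||_{s,T})$ with the norms $|||\cdot|||_{s,T}$ built from $\textbf{C}_T^s$, absorbing this.

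Second, I would estimate $\tilde F_\alpha$. By (\ref{E2-27R0}), each term of $F_\alpha$ other than $D^\alpha g$ is a product in which at least one derivative falls on the ($\textbf{C}^k$, hence bounded) coefficients $\varrho\rho_{ab}$, $\varrho\partial_a\rho_{ab}$, $B$, $f$, and the remaining factor is a derivative of $h$ of order $\le s-1$ in the principal-part contributions, or $\le s$ hit by the Volterra integral $\int_0^t$. Using the Moser-type product/commutator estimates on the compact manifold $M$ together with $|||h|||_{s-1,T}$ already controlled at the previous induction step, and the elementary bound $\|\int_0^t D u_s\|_{\textbf{L}^2}\le T\sup_{[0,T]}\|Du_s\|_{\textbf{L}^2}$, one gets
$\int_0^T e^{-\lambda t}(|\tilde F_\alpha|^2+|D\tilde F_\alpha|^2)\le c(|||g|||_{s,T}^2+|||h|||_{s-1,T}^2)+c(T)\int_0^T e^{-\lambda t}|||h|||_{s,T}^2$,
where the last term carries the top-order piece of $h$. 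Feeding this back, the top-order self-interaction term and the $\varepsilon$-Volterra term are absorbed into the left side by taking $\lambda$ large (for the lower-order pieces) and $\varepsilon$ small and $T$ fixed (for the Volterra pieces, whose constant is $\propto \varepsilon T^2 e^{\lambda T}$ — here one must be a little careful: rather than absorbing via $\lambda$, one treats $\Phi(t):=\sup_{[0,t]}\|\,\cdot\,\|_s^2$ and closes by Gronwall in $t$ on $[0,T]$, which is the cleaner route and is what I would actually write). Undoing the exponential weight on $[0,T]$ and taking the supremum in $t$ then gives (\ref{E2-28}) with the induction hypothesis $|||h|||_{s-1,T}\le c(|||h_0|||_{s-1,T}+|||h_1|||_{s-1,T}+|||g|||_{s-1,T})$ folded in.

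Third, the time derivatives: $|||h|||_{s,T}^2=\sup_{[0,T]}\sum_{i=0}^2\|\partial_t^i h\|_{s-i}^2$ also involves $\|\partial_{tt}h\|_{s-2}$, which is not directly produced by the energy identity. This is recovered algebraically: reading (\ref{E2-1R0}) as $\partial_{tt}h=\varrho\partial_a(\rho_{ab}\partial_b h)-B\partial_a h+\varepsilon f\int_0^t\partial_a h+g$, the right side is bounded in $\textbf{H}^{s-2}$ by $\|h\|_s$, $\|g\|_{s-2}$ and (via the Volterra term) $T\sup_{[0,T]}\|h\|_{s-1}$, all already controlled; similarly for the $s$-vector version (\ref{E2-27R1}). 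Likewise $\|\partial_t h\|_{s-1}$ and $\|\partial_t D^\alpha h\|_{\textbf{L}^2}$ come straight from the energy integral.

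\textbf{Main obstacle.} The delicate point is the Volterra/memory term $-\varepsilon f_s\int_0^t Du_s$: naively it costs a derivative ($Du_s$ is order $s+1$), so it cannot be treated perturbatively in the energy identity the way an ordinary lower-order term can. The mechanism that saves it is that it appears under $\int_0^t$, so in the energy identity it contributes $2\varepsilon e^{-\lambda t}\varrho^{-1}\hat h_t f\int_0^t Du_s$, and after Cauchy--Schwarz the bad factor is $\varepsilon\big(\int_0^t Du_s\big)^2\le \varepsilon T\int_0^T|Du_s|^2$ — i.e. the lost derivative is compensated by the integration in time, turning the term into something controlled by $\varepsilon T^2$ times the very energy on the left. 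Making this rigorous requires carefully tracking that it is $\int_0^T|Du_s|^2$ (not $|Du_s(T)|^2$) that appears and then closing with Gronwall on $[0,T]$ rather than by absorption; this is exactly the structural reason the final time interval is $[0,T]$ for the linear problem and, after the Nash--Moser scheme in Section~3, $[0,\varepsilon^{-1/2}T]$ for the nonlinear one. A secondary, more routine nuisance is bookkeeping the Moser estimates for $\tilde F_\alpha$ on the manifold $M$ so that no term of order $>s$ survives outside the $\int_0^t$, which is guaranteed by the placement of derivatives in (\ref{E2-27R0}).
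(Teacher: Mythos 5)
Your proposal follows essentially the same route as the paper: induction on $s$, applying the Lemma~2.5 energy estimate directly to the differentiated vector system (\ref{E2-27R1}) (whose degenerate structure and Levi condition are inherited), estimating the commutator terms $F_\alpha$ with the induction hypothesis, absorbing the $\varepsilon$-Volterra contribution (the paper does this via large $\lambda$ and the bound $\varepsilon T^2\|f\|_{\textbf{H}^{s+1}}|||h|||_{s+1,T}$ with $\varepsilon$ small, rather than your Gronwall variant), and recovering the time derivatives from the equation via the equivalence of $\|h\|_l$ with $\|h\|_{\textbf{H}^l}+\|\partial\partial_t h\|_{\textbf{H}^l}$. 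The argument is correct and matches the paper's proof in all essential steps.
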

\begin{proof}
The proof is based on the induction. For $s=1$, Lemma 3 gives the result by choosing a suitable $\lambda$. Assume that (\ref{E2-28}) holds for all $1\leq j\leq s$, then we prove that (\ref{E2-28}) holds for $s+1$. Since (\ref{E2-27R1}) has the same structure with (\ref{E2-1R0}), the result in Lemma 3 can be used directly.
Note that the vector $D^sh$ satisfies (\ref{E2-27R1}). By (\ref{E2-26}), we have
\begin{eqnarray}\label{E2-29}
&&\lambda\int_0^T\int_Me^{-\lambda t}((D^s\partial_th)^2+(D^{s+1}h)^2+(D^sh)^2)\sqrt{w}\nonumber\\
&\leq&c_{21}\int_{M}e^{-\lambda t}((D^sh_0)^2+(D^sh_1)^2+(D^{s+1}h_0)^2+(D^{s+1}h_1)^2)\sqrt{w}\nonumber\\
&&+c_{21}\sum_{|\alpha|=s}\int_0^T\int_Me^{-\lambda t}(|F_{\alpha}|^2+|DF_{\alpha}|^2)\sqrt{w}\nonumber\\
&&+c_{21}\varepsilon\int_0^T\int_Me^{-\lambda t}f^2(\int_0^{t}D^{s+1}h)^2\sqrt{w},~~~~~~
\end{eqnarray}
where $c$ is a constant depending on $C_s$.

By (\ref{E2-27R0}), we drive
\begin{eqnarray}\label{E2-29R0}
|F_{\alpha}|^2&+&|DF_{\alpha}|^2\leq|D^{s}g|^2+|D^{s+1}g|^2\nonumber\\
&&+c_{22}(|D^{s+1}h|^2+\varepsilon|Df||\int_0^tD^{s+1}h|)+|F'_{\alpha}|^2,~~~
\end{eqnarray}
where $F'_{\alpha}$ denotes the nonlinear terms involving $D^{\alpha}$ with $|\alpha|\leq s$.

It follows from (\ref{E2-29}) and (\ref{E2-29R0}) that
\begin{eqnarray}\label{E2-29R2}
&&\int_0^T\int_Me^{-\lambda t}(\lambda(D^s\partial_th)^2+(\lambda-c)(D^{s+1}h)^2+\lambda(D^sh)^2)\sqrt{w}\nonumber\\
&\leq&c_{23}\int_{M}e^{-\lambda t}((D^sh_0)^2+(D^sh_1)^2+(D^{s+1}h_0)^2+(D^{s+1}h_1)^2)\sqrt{w}\nonumber\\
&&+c_{23}\int_0^T\int_Me^{-\lambda t}(|D^{s}g|^2+|D^{s+1}g|^2+c\varepsilon|Df||\int_0^tD^{s+1}h|+|F'_{\alpha}|^2)\sqrt{w}\nonumber\\
&&+c_{23}\varepsilon\int_0^T\int_Me^{-\lambda t}f^2\left(\int_0^{t}D^{s+1}h\right)^2\sqrt{w}.~~~~~~
\end{eqnarray}
Note that
\begin{eqnarray}\label{E2-29R3}
\|F'_{\alpha}\|_{\textbf{L}^2}\leq c_{24}(\|h\|_{\textbf{H}^s}+T^2\varepsilon\|f\|_{\textbf{H}^s}|||h|||_{s,T}).
\end{eqnarray}
Thus choosing a $\lambda$ large enough, by (\ref{E2-29R2})-(\ref{E2-29R3}), we get
\begin{eqnarray}\label{E2-30}
\|D^s\partial_th\|_{\textbf{L}^2}+\|D^sh\|_{\textbf{H}^1}&\leq&c_{25}(\|h_0\|_{\textbf{H}^{s+1}}+\|h_1\|_{\textbf{H}^{s+1}}+\|g\|_{\textbf{H}^{s+1}}+\|h\|_{\textbf{H}^{s}})\nonumber\\
&&+\varepsilon T^2\|f\|_{s+1}|||h|||_{s+1,T}.~~~~~~
\end{eqnarray}
Furthermore, we can apply $D^{s-1}$ to both sides of (\ref{E2-1R0}), then deriving a similar estimate with (\ref{E2-30}). We conclude that
\begin{eqnarray}\label{E2-31}
\sum_{j=0}^{s+1}\|\partial_t^jD^{s+1-j}h\|_{\textbf{L}^2}&\leq&c_{26}(\|h_0\|_{\textbf{H}^{s+1}}+\|h_1\|_{\textbf{H}^{s+1}}+\|g\|_{\textbf{H}^{s+1}}+\|h\|_{\textbf{H}^{s}})\nonumber\\
&&+\varepsilon T^2\|f\|_{\textbf{H}^{s+1}}|||h|||_{s+1,T}.~~~~~~
\end{eqnarray}
Note that $\|h\|_l$ is equivalent to $\|h\|_{\textbf{H}^l}+\|\partial\partial_th\|_{\textbf{H}^l}$. Hence (\ref{E2-28}) can be derived by (\ref{E2-31}).
This completes the proof.
\end{proof}

\textbf{Proof of Theorem 3.}
The proof is based on an approximation, which follows the idea of proof of Theorem 1 in Page 576 of \cite{O} or
Theorem 5.1 in \cite{Han0} (also see \cite{Han2,N}).
For any small $\delta>0$, we consider the regularized equation with the initial data (\ref{E2-2}) in $[0,T]\times M$
\begin{eqnarray*}
\partial_{tt}h-(\varrho(x)+\delta)\partial_a(\rho^{ab}(t,x)\partial_bh)+B(t,x)\partial_ah-\varepsilon f(Du,Dv)\int_0^{t}\partial_ah=g(t,x).~~~~
\end{eqnarray*}
Above system is a strictly hyperbolic equation, which is equivalent to
\begin{eqnarray}\label{E2-33}
\partial_t\textbf{h}+\textbf{A}\textbf{h}=\textbf{g},
\end{eqnarray}
with initial data $\textbf{h}=(h_0,h_1,0)^T$, where $\textbf{h}=(\tilde{h},h,z)^T$, $\textbf{g}=(g,0,0)^T$ and
\begin{eqnarray*}
\textbf{A}= \left(
\begin{array}{ccc}
 0& -(\varrho(x)+\delta)\partial_a(\rho^{ab}(t,x)\partial_b)+B(t,x)\partial_a & \varepsilon f  \\
 -1 & 0 & 0\\
 0 & -\partial_a & 0
\end{array}
\right).
\end{eqnarray*}
Note that $f$ is bounded in the norm $|||\cdot|||_{s,T}$ with $1\leq s\leq k-1$ and all coefficients in the operator $A$ are $\textbf{C}^k$ for some integer $k\geq2$. Then linear equation (\ref{E2-33}) admits an $\textbf{H}^k$-solution $\textbf{h}_{\delta}$. By (\ref{E2-28}) in Lemma 4, for $s\leq k-1$, we have
\begin{eqnarray}\label{E2-34}
|||h_{\delta}|||_{s,T}\leq c_{s}\left(||h_0||_{s,T}+||h_1||_{s,T}+|||g|||_{s,T}\right),~~~~
\end{eqnarray}
where $c_{s}$ denotes a constant which is \textbf{independent} of $\delta$. This is because the energy estimate of diffusion term to linearized equation (\ref{E2-34}) is independent of the term $(\varrho(x)+\delta)$. It is the same idea as the proof of Theorem 5.1 in page 456 of \cite{Han0} or Theorem 1.1 in page 332 of \cite{N}. Thus energy estimate (\ref{E2-34}) holds \textbf{uniformly} in $\delta$.
Then there exists a function $h$ and a sequence $\delta_j\longrightarrow0$ such that $h_{\delta_j}\longrightarrow h$ in $\textbf{H}^{k-2}$. Here $h$ is solution of (\ref{E2-1R0}). This completes the proof.

\begin{remark}
In fact, Theorem 3 gives a \textbf{local existence} result on a  linear variable coefficent wave equation (\ref{E2-1R0}) with the initial data (\ref{E2-2}). The constant $c_s$ in (\ref{E2-34}) is independent of $\delta$. Here we want to point out that blow up phenomenon of nonlinear wave equation mainly depends on the nonlinear term. For example, if the quasilinear term satisfies null condition or weak null condition, then the global existence of smooth solution of quasilinear wave equation can be proven.
\end{remark}

\textbf{Proof of Theorem 2}
By (\ref{E2-36}), directly computation shows that
\begin{eqnarray}\label{E2-40}
\partial_a(\epsilon^{ac}\epsilon^{bd}\gamma(u_0)_{cd}\partial_bh^m)&=&\gamma_0(x)\partial_a(\epsilon^{ac}\epsilon^{bd}\gamma(x)_{cd}\partial_bh^m)\nonumber\\
&&+(\partial_a\gamma_0(x))\epsilon^{ac}\epsilon^{bd}\gamma(x)_{cd}\partial_bh^m.~~~~
\end{eqnarray}
Using the second condition in (\ref{E2-38}) and (\ref{E2-37}), we derive
\begin{eqnarray*}
&&\|(\partial_a\gamma_0(x))\epsilon^{ac}\epsilon^{bd}\gamma(x)_{cd}\partial_bh^m\|_s\leq c_{\gamma_1}\|h^m\|_{s+1},\\
&&|||\epsilon^{ac}\epsilon^{bd}\partial_cu_0^m(\int_0^t\partial_dv_n)|||_{s,T}\leq c_{27}T\|u_0\|_{s+1}|||v|||_{s+1,T}\leq c_{T,R},\\
&&|||\epsilon^{ac}\epsilon^{bd}\partial_du_{0m}(\int_0^t\partial_cv^n)|||_{s,T}\leq c_{T,R},\\
&&|||\epsilon^{ac}\epsilon^{bd}\gamma(\int_0^tv)_{cd}|||_{s,T}\leq c_{T,R},
\end{eqnarray*}
where constants $c_{\gamma_1}$ and $c_{T,R}$ depend on $\gamma_1$ and $T$, $R$, respectively.

Thus we can set
\begin{eqnarray}\label{E2-14}
\varrho(x)(\partial_a\rho^{ab}(t,x)\partial_bh^m)-\partial_a(O(\varepsilon)\partial_bh^m)&=&\partial_a(\epsilon^{ac}\epsilon^{bd}\gamma(u_0)_{cd}\partial_bh^m)\nonumber\\
&&+\varepsilon\partial_a(\epsilon^{ac}\epsilon^{bd}\partial_cu_0^m(\int_0^t\partial_dv_n)\partial_bh^m)\nonumber\\
&&+\varepsilon\partial_a(\epsilon^{ac}\epsilon^{bd}\partial_du_{0m}(\int_0^t\partial_cv^n)\partial_bh^m)\nonumber\\
&&+\varepsilon\partial_a(\epsilon^{ac}\epsilon^{bd}\gamma(\int_0^tv)_{cd}\partial_bh^m).~~~~~
\end{eqnarray}
In a similar way with (\ref{E2-14}), we set
\begin{eqnarray*}
\label{E2-14R1}
(B(t,x)+O(\varepsilon))\partial_ah^m&=&-(\partial_a\gamma_0(x))\epsilon^{ac}\epsilon^{bd}\gamma(x)_{cd}\partial_bh^m+O(\varepsilon)\partial_ah^m\nonumber\\
&=&-(\partial_a\gamma_0(x))\epsilon^{ac}\epsilon^{bd}\gamma(x)_{cd}\partial_bh^m\nonumber\\
&&+2\varepsilon\partial_a(\epsilon^{ab}\epsilon^{cd}w^{-1}(\int_0^t\partial_cv^m)(\int_0^t\partial_dv^n)\partial_bh_n)\nonumber\\
&&-\varepsilon\epsilon^{ab}w^{-\frac{1}{2}}\partial_c(\epsilon^{cd}w^{-\frac{1}{2}})\gamma_{bd}\partial_ah^m\nonumber\\
&&-2\varepsilon\epsilon^{ab}w^{-\frac{1}{2}}\partial_c(\epsilon^{cd}w^{-\frac{1}{2}})(\int_0^t\partial_bv^n)(\int_0^t\partial_av^m)\partial_dh_n,~~~
\end{eqnarray*}
\begin{eqnarray*}\label{E2-14R2}
f(Du,Dv)\int_0^{t}\partial_ah^m&=&O(\varepsilon)\int_0^{t}\partial_ah^m=\varepsilon\partial_a(\epsilon^{ac}\epsilon^{bd}\partial_cu_0^m(\int_0^t\partial_dh_n)\partial_bv^m)\nonumber\\
&&+\varepsilon\partial_a(\epsilon^{ac}\epsilon^{bd}\partial_du_{0m}(\int_0^t\partial_ch^n)\partial_bv^m)\nonumber\\
&&+\varepsilon\partial_a(\epsilon^{ac}\epsilon^{bd}(\int_0^t\partial_ch^n)(\int_0^t\partial_dv_n)\partial_bv^m)\nonumber\\
&&+\varepsilon\partial_a(\epsilon^{ac}\epsilon^{bd}(\int_0^t\partial_cv^n)(\int_0^t\partial_dh_n)\partial_bv^m)\nonumber\\
&&-2\varepsilon\partial_a(\epsilon^{ab}\epsilon^{cd}w^{-1}(\int_0^t\partial_ch^m)(\int_0^t\partial_dv^n)\partial_bv_n)\nonumber\\
&&-2\varepsilon\partial_a(\epsilon^{ab}\epsilon^{cd}w^{-1}(\int_0^t\partial_cv^m)(\int_0^t\partial_dh^n)\partial_bv_n)\nonumber\\
&&+\varepsilon\epsilon^{ab}w^{-\frac{1}{2}}\partial_c(\epsilon^{cd}w^{-\frac{1}{2}})((\int_0^t\partial_bh^n)(\int_0^t\partial_dv_n)\partial_av^m+(\int_0^t\partial_bv^n)(\int_0^t\partial_dh_n)\partial_av^m)\nonumber\\
&&+2\varepsilon\epsilon^{ab}w^{-\frac{1}{2}}\partial_c(\epsilon^{cd}w^{-\frac{1}{2}})((\int_0^t\partial_bv^n)(\int_0^t\partial_ah^m)\partial_dv_n+(\int_0^t\partial_bh^n)(\int_0^t\partial_av^m)\partial_dv_n).
\end{eqnarray*}
Note that assumptions (\ref{E2-38})-(\ref{E2-37R}) are equivalent to (\ref{E2-15})-(\ref{E2-15R1}).
Therefore using the similar process of proof of Theorem 3, one can prove this result.

\section{Long time existence for the bosonic membrane equation}
In this section, we construct a solution $u\in\textbf{C}_T^{k}\cap\textbf{B}_{R,T}^k$ ($k\geq2$) of system (\ref{E1-1}) on $[0,\frac{T}{\sqrt{\varepsilon}}]\times M$ by a suitable Nash-Moser iteration scheme, where $\textbf{B}_{R,T}^k$ is defined in (\ref{E2-35}). We know that system (\ref{E1-1}) is equivalent to system (\ref{E2-1R1}) by a arrangement in section 2. Thus the main goal is to solve system (\ref{E2-1R1}).

Rescaling in (\ref{E2-1R1}) amplitude and time as
\begin{eqnarray*}
v^m(t,x)\mapsto\varepsilon^2 v^m(\sqrt{\varepsilon}t,x),~~\varepsilon>0,
\end{eqnarray*}
we are to prove the existence solution on $[0,T]\times M$ of
\begin{eqnarray}\label{E3-1}
\partial_{tt}v^m-\varepsilon^{-1}\partial_a(\epsilon^{ac}\epsilon^{bd}\gamma(u_0)_{cd}\partial_bv^m)=\varepsilon^2\mathcal{F}(v^m),
\end{eqnarray}
with initial data
\begin{eqnarray}\label{E3-1R0}
v^m(0)=v^m_0,~~\partial_tv^m(0)=v^m_1,
\end{eqnarray}
where
\begin{eqnarray}\label{E3-2}
\mathcal{F}(v^m)&=&\partial_a(\epsilon^{ac}\epsilon^{bd}\partial_cu_0^m(\int_0^t\partial_dv_n)\partial_bv^m)+\partial_a(\epsilon^{ac}\epsilon^{bd}\partial_du_{0m}(\int_0^t\partial_cv^n)\partial_bv^m)\nonumber\\
&&+\varepsilon\partial_a(\epsilon^{ac}\epsilon^{bd}\gamma(\int_0^tv)_{cd}\partial_bv^m)
-2\varepsilon\partial_a\left(\epsilon^{ab}\epsilon^{cd}w^{-1}(\int_0^t\partial_cv^m)(\int_0^t\partial_dv^n)\partial_bv_n\right)\nonumber\\
&&+\varepsilon\epsilon^{ab}w^{-\frac{1}{2}}\partial_c(\epsilon^{cd}w^{-\frac{1}{2}})\left(\gamma_{bd}\partial_av^m+2(\int_0^t\partial_av^m)(\int_0^t\partial_bv^n)\partial_dv_n\right).~~~~~
\end{eqnarray}
Introduce an auxiliary function
\begin{eqnarray}\label{E3-1R2}
W^m(t,x)=v^m(t,x)-v^m_0-v^m_1t,
\end{eqnarray}
then the initial value problem (\ref{E3-1})-(\ref{E3-1R0}) is equivalent to
\begin{eqnarray}\label{E3-1R1}
\mathcal{G}(W^m):=\partial_{tt}W^m-\varepsilon^{-1}\partial_a(\epsilon^{ac}\epsilon^{bd}\gamma(u_0)_{cd}\partial_bW^m)-\varepsilon^2\mathcal{F}(W^m)=0,
\end{eqnarray}
with zero initial data
\begin{eqnarray*}
W^m(0)=0,~~\partial_tW^m(0)=0.
\end{eqnarray*}
We treat problem (\ref{E3-1R1}) iteratively as a small perturbation of the linear degenerate hyperbolic equation (\ref{E2-13}).
Linearizing nonlinear system (\ref{E3-1R1}), we obtain the linearized operator
\begin{eqnarray}\label{E3-3}
\mathcal{L}_{\varepsilon}h^m=\mathcal{L}h^m-\varepsilon^2\partial_W\mathcal{F}(W^m)h^m,
\end{eqnarray}
where
\begin{eqnarray*}
\mathcal{L}h^m=\partial_{tt}h^m-\varepsilon^{-1}\partial_a(\epsilon^{ac}\epsilon^{bd}\gamma(u_0)_{cd}\partial_bh^m),\\
\end{eqnarray*}
\begin{eqnarray}\label{E3-2R0}
\partial_W\mathcal{F}(W^m)h^m&=&\partial_a(\epsilon^{ac}\epsilon^{bd}\partial_cu_0^m(\int_0^t\partial_dW_n)\partial_bh^m)
+\partial_a(\epsilon^{ac}\epsilon^{bd}\partial_cu_0^m(\int_0^t\partial_dh_n)\partial_bW^m)\nonumber\\
&&+\partial_a(\epsilon^{ac}\epsilon^{bd}\partial_du_{0m}(\int_0^t\partial_cW^n)\partial_bh^m)+\partial_a(\epsilon^{ac}\epsilon^{bd}\partial_du_{0m}(\int_0^t\partial_ch^n)\partial_bW^m)\nonumber\\
&&+\varepsilon\partial_a(\epsilon^{ac}\epsilon^{bd}\gamma(\int_0^tW)_{cd}\partial_bh^m)+\varepsilon\partial_a(\epsilon^{ac}\epsilon^{bd}(\int_0^t\partial_ch^n)(\int_0^t\partial_dW_n)\partial_bW^m)\nonumber\\
&&+\varepsilon\partial_a(\epsilon^{ac}\epsilon^{bd}(\int_0^t\partial_cW^n)(\int_0^t\partial_dh_n)\partial_bW^m)\nonumber\\
&&-2\varepsilon\partial_a(\epsilon^{ab}\epsilon^{cd}w^{-1}(\int_0^t\partial_ch^m)(\int_0^t\partial_dW^n)\partial_bW_n)\nonumber\\
&&-2\varepsilon\partial_a(\epsilon^{ab}\epsilon^{cd}w^{-1}(\int_0^t\partial_cW^m)(\int_0^t\partial_dh^n)\partial_bW_n)\nonumber\\
&&-2\varepsilon\partial_a(\epsilon^{ab}\epsilon^{cd}w^{-1}(\int_0^t\partial_cW^m)(\int_0^t\partial_dW^n)\partial_bh_n)\nonumber\\
&&+\varepsilon\epsilon^{ab}w^{-\frac{1}{2}}\partial_c(\epsilon^{cd}w^{-\frac{1}{2}})(\gamma_{bd}\partial_ah^m+(\int_0^t\partial_bh^n)(\int_0^t\partial_dW_n)\partial_aW^m)\nonumber\\
&&+\varepsilon\epsilon^{ab}w^{-\frac{1}{2}}\partial_c(\epsilon^{cd}w^{-\frac{1}{2}})((\int_0^t\partial_bW^n)(\int_0^t\partial_dh_n)\partial_aW^m\nonumber\\
&&+2\varepsilon(\int_0^t\partial_bW^n)(\int_0^t\partial_ah^m)\partial_dW_n)\nonumber\\
&&+2\varepsilon\epsilon^{ab}w^{-\frac{1}{2}}\partial_c(\epsilon^{cd}w^{-\frac{1}{2}})((\int_0^t\partial_bh^n)(\int_0^t\partial_aW^m)\partial_dW_n\nonumber\\
&&+(\int_0^t\partial_bW^n)(\int_0^t\partial_aW^m)\partial_dh_n).
\end{eqnarray}
For the nonlinear term, by (\ref{E3-2}) and (\ref{E3-2R0}), direct computations show that
\begin{eqnarray}\label{E3-4}
R(h^m)&:=&\mathcal{F}(W^m+h^m)-\mathcal{F}(W^m)-\partial_W\mathcal{F}(W^m)h^m\nonumber\\
&=&\partial_a(\epsilon^{ac}\epsilon^{bd}\partial_cu_0^m(\int_0^t\partial_dh_n)\partial_bh^m)+\partial_a(\epsilon^{ac}\epsilon^{bd}\partial_du_{0m}(\int_0^t\partial_ch^n)\partial_bh^m)\nonumber\\
&&+\varepsilon\partial_a(\epsilon^{ac}\epsilon^{bd}\gamma(\int_0^th)_{cd}\partial_bh^m)+\varepsilon\partial_a(\epsilon^{ac}\epsilon^{bd}(\int_0^t\partial_cW_n)(\int_0^t\partial_dh^n)\partial_bh^m)\nonumber\\
&&+\varepsilon\partial_a(\epsilon^{ac}\epsilon^{bd}\gamma(\int_0^th)_{cd}\partial_bW^m)+\varepsilon\partial_a(\epsilon^{ac}\epsilon^{bd}(\int_0^t\partial_ch_n)(\int_0^t\partial_dW^n)\partial_bh^m)\nonumber\\
&&-2\varepsilon\partial_a\left(\epsilon^{ab}\epsilon^{cd}w^{-1}(\int_0^t\partial_ch^m)(\int_0^t\partial_dh^n)\partial_bh_n\right)\nonumber\\
&&-2\varepsilon\partial_a\left(\epsilon^{ab}\epsilon^{cd}w^{-1}(\int_0^t\partial_ch^m)(\int_0^t\partial_dh^n)\partial_bW_n\right)\nonumber\\
&&-2\varepsilon\partial_a\left(\epsilon^{ab}\epsilon^{cd}w^{-1}(\int_0^t\partial_cW^m)(\int_0^t\partial_dh^n)\partial_bh_n\right)\nonumber\\
&&-2\varepsilon\partial_a\left(\epsilon^{ab}\epsilon^{cd}w^{-1}(\int_0^t\partial_ch^m)(\int_0^t\partial_dW^n)\partial_bh_n\right)\nonumber\\
&&+\varepsilon\epsilon^{ab}w^{-\frac{1}{2}}\partial_c(\epsilon^{cd}w^{-\frac{1}{2}})(\gamma(h)_{bd}\partial_ah^m)\nonumber\\
&&+\varepsilon\epsilon^{ab}w^{-\frac{1}{2}}\partial_c(\epsilon^{cd}w^{-\frac{1}{2}})(\gamma(h)_{bd}\partial_aW^m)\nonumber\\
&&+\varepsilon\epsilon^{ab}w^{-\frac{1}{2}}\partial_c(\epsilon^{cd}w^{-\frac{1}{2}})((\int_0^t\partial_bW^n)(\int_0^t\partial_dh_n)\partial_ah^m)\nonumber\\
&&+\varepsilon\epsilon^{ab}w^{-\frac{1}{2}}\partial_c(\epsilon^{cd}w^{-\frac{1}{2}})((\int_0^t\partial_bh^n)(\int_0^t\partial_dW_n)\partial_ah^m)\nonumber\\
&&+2\varepsilon\epsilon^{ab}w^{-\frac{1}{2}}\partial_c(\epsilon^{cd}w^{-\frac{1}{2}})(\int_0^t\partial_ah^m)(\int_0^t\partial_bh^n)\partial_dh_n\nonumber\\
&&+2\varepsilon\epsilon^{ab}w^{-\frac{1}{2}}\partial_c(\epsilon^{cd}w^{-\frac{1}{2}})(\int_0^t\partial_ah^m)(\int_0^t\partial_bh^n)\partial_dW_n\nonumber\\
&&+2\varepsilon\epsilon^{ab}w^{-\frac{1}{2}}\partial_c(\epsilon^{cd}w^{-\frac{1}{2}})(\int_0^t\partial_ah^m)(\int_0^t\partial_bW^n)\partial_dh_n\nonumber\\
&&+2\varepsilon\epsilon^{ab}w^{-\frac{1}{2}}\partial_c(\epsilon^{cd}w^{-\frac{1}{2}})(\int_0^t\partial_aW^m)(\int_0^t\partial_bh^n)\partial_dh_n.
\end{eqnarray}
Using the product estimate $\|uv\|_s\leq\|u\|_s\|v\|_s$ (see (2.18) in \cite{Allen1}), directly estimating each terms in (\ref{E3-4}), we obtain the following result.
\begin{lemma}
For any $s\geq2$, there holds
\begin{eqnarray}\label{E3-5}
|||R(h)|||_{s,T}\leq c_{28}\left(|||h|||_{s+2,T}^2(1+|||W|||_{s+1,T})+|||h|||_{s+2,T}^3\right),
\end{eqnarray}
where $c_{28}$ depends on $|||u_0|||_{s+2}$.
\end{lemma}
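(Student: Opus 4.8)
The plan is to estimate each summand in the explicit expansion (\ref{E3-4}) of $R(h^m)$ using the algebra and interpolation properties of the spaces $\textbf{C}_T^s$. Recall that every term in (\ref{E3-4}) has the schematic form $\partial_a(A\cdot B\cdot C)$ or $w^{-\frac12}\partial_c(\epsilon^{cd}w^{-\frac12})(A\cdot B\cdot C)$, where each of $A,B,C$ is one of: a fixed coefficient built from $u_0$ and $w$ (smooth, bounded in every $\textbf{H}^\ell$), a first derivative $\partial h^m$, a time-integral $\int_0^t\partial h_n$, or the analogous object with $W$ in place of $h$. The outer $\partial_a$ costs one spatial derivative, which is why the right-hand side of (\ref{E3-5}) is controlled by $|||h|||_{s+2,T}$ rather than $|||h|||_{s,T}$; the factor $w^{-\frac12}\partial_c(\epsilon^{cd}w^{-\frac12})$ is a fixed smooth multiplier and contributes only a harmless constant.

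First I would record the two elementary facts I will use repeatedly: (i) for $s\ge 2$ (so that $\textbf{H}^s(M)\hookrightarrow\textbf{L}^\infty(M)$ on the compact surface $M$), $\textbf{H}^s$ is a Banach algebra and more generally $\|AB\|_{\textbf{H}^s}\le c(\|A\|_{\textbf{L}^\infty}\|B\|_{\textbf{H}^s}+\|A\|_{\textbf{H}^s}\|B\|_{\textbf{L}^\infty})$, which upgrades to the spacetime norm $|||\cdot|||_{s,T}$ by taking $\sup_{[0,T]}$; and (ii) the time-integration bound $|||\int_0^t\partial h_n\,|||_{s,T}\le T\,|||h|||_{s+1,T}$, since $\|\int_0^t\partial h_n(\tau)\,d\tau\|_{\textbf{H}^s}\le\int_0^t\|\partial h_n(\tau)\|_{\textbf{H}^s}\,d\tau\le T\sup_{[0,T]}\|h\|_{\textbf{H}^{s+1}}$. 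With these, a generic cubic term $\partial_a(A\cdot\int_0^t\partial h\cdot\partial h)$ is bounded in $|||\cdot|||_{s,T}$ by $c\,|||A|||_{s+1}\,T\,|||h|||_{s+1,T}\,|||h|||_{s+1,T}\le c\,|||u_0|||_{s+2}\,|||h|||_{s+2,T}^2$, and a quartic term with one extra $W$ factor picks up an additional $|||W|||_{s+1,T}$ or, when all three dynamical factors are $h$, gives $|||h|||_{s+2,T}^3$.

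Next I would simply run through (\ref{E3-4}) line by line, sorting the terms into three groups: (a) those quadratic in $h$ with the third factor a fixed $u_0$- or $w$-coefficient (the first, second, and the $\gamma(\int_0^th)_{cd}\partial_bh^m$, and the two $\gamma(h)_{bd}\partial_ah^m$-type terms) — each bounded by $c\,|||h|||_{s+2,T}^2$; (b) those quadratic in $h$ but with one factor of $W$ (the mixed $W$–$h$–$h$ terms) — each bounded by $c\,|||h|||_{s+2,T}^2\,|||W|||_{s+1,T}$; (c) those cubic in $h$ (the $w^{-1}(\int\partial h)(\int\partial h)\partial h$ term and the $w^{-\frac12}\partial_c(\cdots)(\int\partial h)(\int\partial h)\partial h$ terms) — each bounded by $c\,|||h|||_{s+2,T}^3$. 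Summing the finitely many contributions and absorbing the $T$-powers and the fixed $|||u_0|||_{s+2}$, $|||w^{\pm\frac12}|||$ factors into $c_{28}$ yields exactly (\ref{E3-5}). Since $0<R<1$ and we only ever use $|||h|||_{s+2,T},|||W|||_{s+1,T}$ bounded by a fixed constant, no smallness of $\varepsilon$ is needed here (the explicit $\varepsilon$'s multiplying many terms only help).

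The only mildly delicate point — and the one I would be most careful about — is the bookkeeping of derivative counts: the time-integral factors must be estimated in $\textbf{H}^{s+1}$ (losing one derivative to recover $\partial h$ inside the integral from the bound on $h$), the bare $\partial h$ factor costs another derivative, and the outer divergence $\partial_a$ costs the third, so one must verify that $s+2$ (and not $s+3$) suffices on the right. This works because in each product at most one factor is differentiated twice in the relevant counting: the outer $\partial_a$ can be distributed by the Leibniz rule, and when it lands on $\int_0^t\partial h$ it produces $\int_0^t\partial^2 h$, controlled by $T|||h|||_{s+2,T}$ in $\textbf{H}^s$; when it lands on the bare $\partial h$ it produces $\partial^2 h$, controlled by $|||h|||_{s+2,T}$; and when it lands on the smooth coefficient it costs nothing dynamical. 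Carrying this accounting uniformly across all lines of (\ref{E3-4}), together with the algebra estimate (i) to move $\textbf{L}^\infty$ norms onto the lower-order factors, closes the argument.
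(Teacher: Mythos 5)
Your proposal follows essentially the same route as the paper, which proves the lemma exactly by ``directly estimating each term'' of the expansion (\ref{E3-4}) via the algebra property of $\textbf{H}^s$ (for $s\geq2$ on the compact surface $M$), the bound $\|\int_0^t\partial h\|_{\textbf{H}^{s}}\leq T\sup_{[0,T]}\|h\|_{\textbf{H}^{s+1}}$, and absorbing the outer divergence and the fixed $u_0$-, $w$-coefficients into $c_{28}$, so your write-up is a correct fleshing-out of the paper's argument. One minor bookkeeping slip: the terms containing $\gamma(\int_0^t h)_{cd}\partial_b h^m$ and $\gamma(h)_{bd}\partial_a h^m$ are cubic (not quadratic) in $h$ and belong in your group (c), but this does not affect the final inequality since $|||h|||_{s+2,T}^3$ already appears on the right-hand side of (\ref{E3-5}).
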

Let  $\Pi_{\theta}\in\textbf{C}^{\infty}(\textbf{R})$ such that
$\Pi_{\theta}=0$ for $\theta\leq0$ and $\Pi_{\theta}\longrightarrow I$ for $\theta\longrightarrow\infty$.
we introduce a family of smooth functions $S(\theta')$ with $S(\theta')=0$ for $\theta'\leq0$ and $S(\theta')=1$ for $\theta'\geq1$ (see \cite{Schwartz}).
For $W\in\textbf{C}_T^s$, we define
\begin{eqnarray*}
\Pi_{|x'|-|x|}W(t,x)=S(|x'|-|x|)W(t,x),~~x,x'\in \textbf{R}^n.
\end{eqnarray*}
For $l=0,1,2,\ldots,$, by setting
\begin{eqnarray}\label{E3-6}
|x'|:=N_l=2^l,
\end{eqnarray}
then by (\ref{E2-0}), it is directly to check that
\begin{eqnarray}\label{E3-7}
&&\|\Pi_{|x'|-|x|}W\|_{\textbf{H}^{s_1}}\leq c_{s_1,s_2}N_l^{s_1-s_2}\|W\|_{\textbf{H}^{s_2}},~~\forall~s_1\geq s_2\geq0,\\
&&\|\Pi_{|x'|-|x|}W-W\|_{\textbf{H}^{s_1}}\leq c_{s_1,s_2}N_l^{s_1-s_2}\|W\|_{\textbf{H}^{s_2}},~~\forall~0\leq s_1\leq s_2.\nonumber
\end{eqnarray}
For convenience, we denote $\Pi_{N_l-|x|}$ by $\Pi_{N_l}$. We approximate system (\ref{E3-1R1}) and get
the following approximation system
\begin{eqnarray}\label{E3-13}
\mathcal{G}_{N_l}(W^m):=\partial_{tt}W^m-\varepsilon^{-1}\partial_a(\epsilon^{ac}\epsilon^{bd}\gamma(u_0)_{cd}\partial_bW^m)-\varepsilon^2\Pi_{N_l}\mathcal{F}(W^m).
\end{eqnarray}
The following Lemma is to construct the ``$l$ th'' step approximation solution.
\begin{lemma}
There exist a family of linear maps $\Psi^l:\textbf{C}_T^k\mapsto\textbf{C}_{T}^k$ such that
\begin{eqnarray*}
\Psi^l(W^l)=W^l+h^{l+1},~~(t,x)\in[0,T]\times M,~~l=0,1,\cdots
\end{eqnarray*}
where $W^l=\sum_{i=0}^lh^i$, $h^{l+1}$ is the solution of the initial value problem
\begin{eqnarray*}
&&E^{l}+\mathcal{L}_{\varepsilon}(h^{l+1})=0,\\
&&h^{l+1}(0,x)=0,~~\partial_th^{l+1}(0,x)=0,
\end{eqnarray*}
and $E^{l}$ satisfies
\begin{eqnarray*}
E^{l}=\mathcal{G}_{N_{l}}(W^{l})=\mathcal{G}_{N_{l}}(\Psi^{l-1}\underbrace{\circ\cdots\circ}_{l-1}\Psi^{0}(W^0)).
\end{eqnarray*}
Moreover, for $0\leq s\leq k-2$, it holds
\begin{eqnarray}\label{E3-9}
|||h^{l+1}|||_{s,T}\leq c_{29}|||E^l|||_{s,T}.
\end{eqnarray}
\end{lemma}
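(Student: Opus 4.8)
The plan is to read $\Psi$ as the Nash--Moser iteration operator and reduce everything to Theorem~2.1. Given the $l$-th iterate $W^l=\sum_{i=0}^l h^i\in\textbf{C}_T^k$ (with $W^0=h^0=0$, matching the zero initial data in (\ref{E3-1R1})), I would first set $E^l:=\mathcal{G}_{N_l}(W^l)$; by (\ref{E3-13}) it depends only on $W^l$, and since $W^l=\Psi^l(W^0)$ by construction one gets $E^l=\mathcal{G}_{N_l}(\Psi^l(W^0))$. I would then \emph{define} $h^{l+1}$ to be the solution of the linear Cauchy problem $\mathcal{L}_\varepsilon(h^{l+1})=-E^l$ with $h^{l+1}(0,\cdot)=\partial_t h^{l+1}(0,\cdot)=0$, and put $\Psi(W^l):=W^l+h^{l+1}$. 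Since the operator $\mathcal{L}_\varepsilon$ of (\ref{E3-3}) acts linearly on its argument, once solvability is established $h^{l+1}$ is uniquely determined by $E^l$, so $\Psi$ is well defined and $\Psi^{l+1}(W^0)=\Psi(W^l)=W^{l+1}$.

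The whole content is thus the solvability of $\mathcal{L}_\varepsilon h=-E^l$ together with the accompanying energy bound, and for this I would verify that $\mathcal{L}_\varepsilon$ is an operator of exactly the type treated in Section~2. Its principal part $\mathcal{L}h=\partial_{tt}h-\varepsilon^{-1}\partial_a(\epsilon^{ac}\epsilon^{bd}\gamma(u_0)_{cd}\partial_b h)$ has degenerate leading coefficient $\varepsilon^{-1}\gamma(u_0)_{cd}=\varepsilon^{-1}\gamma_0(x)\gamma(x)_{cd}$, which by (\ref{E2-36})--(\ref{E2-38}) factors as $\varrho(x)\rho_{ab}(t,x)$ with $\varrho=\gamma_0$, $0\le\varrho\le1$, and $\rho_{ab}=\varepsilon^{-1}\gamma(x)_{cd}$ non-degenerate (the $\varepsilon^{-1}$ only enlarging the constants $\rho_0\le\rho_1$), while (\ref{E2-37}) yields the ellipticity (\ref{E2-15}) and (\ref{E2-38}) yields the Levi condition (\ref{E2-15R1}). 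The remaining operator $-\varepsilon^2\partial_W\mathcal{F}(W^m)$ of (\ref{E3-2R0}) is, after integrating by parts its second-order pieces — whose coefficients are built from $\int_0^t\partial W$ and hence gain a derivative — a first-order operator of the form $B(t,x)\partial_a h-\varepsilon f(Du,Dv)\int_0^t\partial_a h$ with coefficients controlled by $|||u_0|||_k$ and $|||W^l|||_{k,T}$ and carrying an overall factor $\varepsilon$. Hence, so long as $W^l\in\textbf{B}_{R,T}^k$ and $\varepsilon$ is small, the Levi inequality and the boundedness of $f$ in $|||\cdot|||_{s,T}$ hold, and Theorem~2.1 (equivalently Theorem~2.2 after the reductions (\ref{E2-40})--(\ref{E2-14})) applies with external force $g=-E^l$ and zero data, producing $h^{l+1}$ and completing the construction of $\Psi$.

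The bound (\ref{E3-9}) is then immediate from the a priori estimate (\ref{E2-32}) of Theorem~2.1 with $h_0=h_1=0$ and $g=-E^l$: for $1\le s\le k-2$,
\begin{eqnarray*}
|||h^{l+1}|||_{s,T}\le c_3\big(|||h_0|||_{s,T}+|||h_1|||_{s,T}+|||E^l|||_{s,T}\big)=c_3\,|||E^l|||_{s,T},
\end{eqnarray*}
with the case $s=0$ covered by the basic energy identities behind Lemmas~2.3--2.5; so (\ref{E3-9}) holds with $c_{29}=c_3$.

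I expect the only genuine difficulty to lie in the second step: checking that the \emph{full} operator $\mathcal{L}_\varepsilon$, not merely its degenerate principal part, satisfies the hypotheses of Theorem~2.1 with constants uniform in $l$. This is essentially bookkeeping — showing that each lower-order coefficient generated by $\varepsilon^2\partial_W\mathcal{F}(W^l)$ (these mix $u_0$, $W^l=\Psi^l(W^0)$ and their time integrals) stays bounded in the norms Theorem~2.1 requires, which is precisely where the induction hypothesis $W^l\in\textbf{B}_{R,T}^k$ and the smallness of $\varepsilon$ are used, and that the Levi inequality (\ref{E2-15R1}) is stable under this perturbation. Once this is done, the lemma is a direct consequence of the results of Section~2.
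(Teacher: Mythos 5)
Your proposal follows essentially the same route as the paper: the paper likewise defines $h^{l+1}$ as the solution, guaranteed by Theorem~2.1, of $\mathcal{L}_{\varepsilon}(h^{l+1})=-E^{l}$ with zero Cauchy data and obtains (\ref{E3-9}) directly from the a priori estimate (\ref{E2-32}); your extra bookkeeping verifying that $\mathcal{L}_{\varepsilon}$ (degenerate principal part $\varepsilon^{-1}\gamma_0(x)\gamma(x)_{cd}$ plus the small perturbation $\varepsilon^{2}\partial_W\mathcal{F}$) fits the hypotheses of Section~2 is detail the paper simply asserts. The only difference is that the paper's proof also records the Taylor-expansion identity $\mathcal{G}(W^l+h^{l+1})=E^l+\mathcal{L}_{\varepsilon}(h^{l+1})+R(h^{l+1})$ and hence $E^{l+1}=R(h^{l+1})$, which is preparation for Theorem~3.3 rather than part of this lemma's claim.
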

\begin{proof}
Assume that a suitable ''$0$th step'' approximation solution of (\ref{E3-1R1}) has been chosen, which is $W^0\neq0$. The ``$l$th step'' approximation solution is denoted by
\begin{eqnarray*}
W^l=\sum_{i=0}^lh^i.
\end{eqnarray*}
Define
\begin{eqnarray}\label{E3-8R0}
E^l=\partial_{tt}W^l-\varepsilon^{-1}\partial_a(\epsilon^{ac}\epsilon^{bd}\gamma(u_0)_{cd}\partial_bW^l)-\varepsilon^2\Pi_{N_{l+1}}\mathcal{F}(W^l).
\end{eqnarray}
Then we plan to find the ``$l$th step'' approximation solution $W^{l+1}$. By (\ref{E3-1R1}), we have
\begin{eqnarray}\label{E3-8}
\mathcal{G}(W^l+h^{l+1})&=&\partial_{tt}(W^l+h^{l+1})-\varepsilon^{-1}\partial_a(\epsilon^{ac}\epsilon^{bd}\gamma(u_0)_{cd}\partial_b(W^l+h^{l+1}))-\varepsilon^2\Pi_{N_{l+1}}\mathcal{F}(W^l+h^{l+1})\nonumber\\
&=&\partial_{tt}W^l-\varepsilon^{-1}\partial_a(\epsilon^{ac}\epsilon^{bd}\gamma(u_0)_{cd}\partial_bW^l)-\varepsilon^2\Pi_{N_{l+1}}\mathcal{F}(W^l)\nonumber\\
&&+\partial_{tt}h^{l+1}-\varepsilon^{-1}\partial_a(\epsilon^{ac}\epsilon^{bd}\gamma(u_0)_{cd}\partial_bh^{l+1})-\varepsilon^2\partial_{W^l}\Pi_{N_{l+1}}\mathcal{F}(W^l)h^{l+1}\nonumber\\
&&-\varepsilon^2\Pi_{N_{l+1}}\left(\mathcal{F}(W^l+h^{l+1})+\mathcal{F}(W^l)+\partial_{W^l}\mathcal{F}(W^l)h^{l+1}\right)\nonumber\\
&=&E^l+\mathcal{L}_{\varepsilon}(h^{l+1})+R(h^{l+1}),
\end{eqnarray}
where
\begin{eqnarray}\label{E3-8R1}
R(h^{l+1})=-\varepsilon^2\Pi_{N_{l+1}}\left(\mathcal{F}(W^l+h^{l+1})+\mathcal{F}(W^l)+\partial_{W^l}\mathcal{F}(W^l)h^{l+1}\right).
\end{eqnarray}
By Theorem 3 in section 2, there exists a solution $h^{l+1}$ of
\begin{eqnarray*}
&&E^l+\mathcal{L}_{\varepsilon}(h^{l+1})=0,\\
&&h^{l+1}(0,x)=0,~~\partial_th^{l+1}(0,x)=0.
\end{eqnarray*}
A similar estimate with (\ref{E2-32}) is derived as
\begin{eqnarray*}
|||h^{l+1}|||_{s,T}\leq c_{29}|||E^l|||_{s,T}.
\end{eqnarray*}
Furthermore, one can know from (\ref{E3-8R0}) and (\ref{E3-8}) that
\begin{eqnarray}\label{E3-10}
E^{l+1}=R(h^{l+1}).
\end{eqnarray}
\end{proof}

For $2\leq s_{0}<\bar{s}<s<\tilde{s}\leq k-1$, set
\begin{eqnarray}\label{E3-11}
&&s_l:=\bar{s}+\frac{s-\bar{s}}{2^l},\\
\label{E3-12}
&&\alpha_{l+1}:=s_l-s_{l+1}=\frac{s-\bar{s}}{2^{l+1}}.
\end{eqnarray}
By (\ref{E3-11})--(\ref{E3-12}), it follows that
\begin{eqnarray*}
s_0>s_1>\ldots>s_l>s_{l+1}>\ldots.
\end{eqnarray*}

\begin{theorem}
System (\ref{E3-1}) with initial data (\ref{E3-1R0})
has a solution
\begin{eqnarray}\label{E3-18}
v^m(t,x)=W^m_{\infty}+v^m_0+v^m_1t,
\end{eqnarray}
where $W^m_{\infty}$ has the form
\begin{eqnarray*}
W^m_{\infty}=\sum_{i=0}^{\infty}h_i\in\textbf{C}_{T}^{\bar{s}}.
\end{eqnarray*}
\end{theorem}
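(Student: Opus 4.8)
The plan is to run the Nash–Moser scheme set up in Lemma 3.2 and Lemma 3.3 and show that the sequence $W^l=\sum_{i=0}^l h^i$ converges in $\textbf{C}_T^{\bar s}$ to a solution $W_\infty^m$ of $\mathcal{G}(W_\infty^m)=0$, whence $v^m$ given by $(\ref{E3-18})$ solves $(\ref{E3-1})$–$(\ref{E3-1R0})$. The backbone of the argument is a quadratic recursion for the errors $E^l$. By $(\ref{E3-10})$ we have $E^{l+1}=R(h^{l+1})$, and by Lemma 3.1 together with the smoothing bound $(\ref{E3-7})$ applied to $\Pi_{N_{l+1}}$ in $(\ref{E3-8R1})$, the term $R(h^{l+1})$ is controlled in $|||\cdot|||_{s_{l+1},T}$ by $N_{l+1}^{\alpha_{l+1}}$ times a quadratic-plus-cubic expression in $|||h^{l+1}|||_{s_l+2,T}$. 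Combining this with $(\ref{E3-9})$, $|||h^{l+1}|||_{s_l,T}\le c_{29}|||E^l|||_{s_l,T}$, yields an inequality of the schematic form
\begin{eqnarray*}
|||E^{l+1}|||_{s_{l+1},T}\leq c\,N_{l+1}^{\alpha_{l+1}}\left(|||E^l|||_{s_l,T}^2(1+|||W^l|||_{s_l+1,T})+|||E^l|||_{s_l,T}^3\right).
\end{eqnarray*}

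Next I would close the induction. Assuming inductively $|||E^l|||_{s_l,T}\le \delta_l$ and $|||W^l|||_{s_l+1,T}\le R<1$, one checks that the quadratic gain beats the polynomial loss $N_{l+1}^{\alpha_{l+1}}=2^{(l+1)(s-\bar s)/2^{l+1}}$ — indeed $\sum_l (l+1)\alpha_{l+1}<\infty$, so the accumulated factors $\prod N_{i}^{\alpha_i}$ stay bounded — and hence $\delta_{l+1}\le C\,N_{l+1}^{\alpha_{l+1}}\delta_l^2$. The standard super-exponential-decay lemma then shows that if the ``$0$th step'' error $|||E^0|||_{s_0,T}$ is small enough, which is arranged by choosing $W^0$ appropriately and using that the rescaling in $(\ref{E3-1})$ puts a factor $\varepsilon^2$ in front of $\mathcal{F}$, then $\delta_l\to0$ doubly exponentially. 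Consequently $|||h^{l+1}|||_{\bar s,T}\le c_{29}|||E^l|||_{\bar s,T}\le c_{29}\delta_l$ is summable, so $W^l\to W_\infty^m$ in $\textbf{C}_T^{\bar s}$; simultaneously the bound $|||W^l|||_{s_l+1,T}\le R$ is propagated, which keeps $v^l=W^l+v_0^m+v_1^m t\in\textbf{B}_{R,T}^k$ so that Theorem 2.1 applies at every step. Passing to the limit in $\mathcal{G}_{N_l}(W^l)=E^l$ and using $\Pi_{N_l}\to I$ together with $E^l\to0$ gives $\mathcal{G}(W_\infty^m)=0$.

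The main obstacle is the loss of derivatives: the linearized estimate $(\ref{E2-32})$ loses two spatial derivatives (the solution lives in $\textbf{H}^{k-2}$ for data in $\textbf{H}^{k-2}$ but the forcing is measured in a higher norm through $F_\alpha$), and the remainder $R(h)$ in $(\ref{E3-5})$ costs two more derivatives, so a naive Picard iteration would diverge. This is precisely what forces the Nash–Moser architecture: the smoothing operators $\Pi_{N_l}$ must be tuned so that the factor $N_l^{\alpha_l}$ compensating the derivative loss is dominated by the quadratic contraction of the Newton step. The delicate bookkeeping is to verify that the sequence $s_l\downarrow\bar s$ defined in $(\ref{E3-11})$ leaves enough room — two derivatives at each stage for the linearized solve and two for $R$ — while $\sum_l(l+1)\alpha_{l+1}$ remains finite, and to check that the ``tame'' constants $c_{28},c_{29}$ stay uniformly bounded along the iteration because $W^l$ never leaves $\textbf{B}_{R,T}^k$. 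Once these quantitative thresholds on $\varepsilon$ and on $|||E^0|||_{s_0,T}$ are pinned down, the convergence is routine.
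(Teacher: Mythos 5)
Your proposal follows essentially the same route as the paper's proof: a Newton/Nash--Moser iteration in which $E^{l+1}=R(h^{l+1})$ by (\ref{E3-10}), the remainder estimate (\ref{E3-5}) combined with the smoothing bound (\ref{E3-7}) and the linear estimate (\ref{E3-9}) give a quadratic recursion for $|||E^{l}|||_{s_l,T}$ with a polynomially growing loss factor, and smallness of $\varepsilon$ and of $|||E^0|||_{s_0,T}$ forces doubly exponential decay, so $W^l=\sum_{i\le l}h^i$ stays in $\textbf{B}_{R,T}$ and converges in $\textbf{C}_T^{\bar s}$ to a solution, exactly as in the paper's induction. The only discrepancy is bookkeeping: the paper carries the derivative loss as a fixed factor $N_{l+1}^{2}$ (using $\Pi_{N_{l+1}}$ to absorb the two-derivative loss of $R$ at level $s_{l+1}$) and then unrolls the recursion explicitly, whereas you quote a factor $N_{l+1}^{\alpha_{l+1}}$ acting on $|||h^{l+1}|||_{s_l+2,T}$ (a norm not directly controlled by (\ref{E3-9})) and invoke a standard decay lemma; since the scheme tolerates any fixed polynomial loss, this does not change the argument.
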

\begin{proof}
The proof is based on the induction. For any $l=0,1,2,\ldots$, we claim that there exists a constant $0<d<1$ such that
\begin{eqnarray}\label{E3-14R0}
&&|||h^{l+1}|||_{s_{l+1},T}<d^{2^{l}}<1,\\
\label{E3-14R1}
&&|||E^{l+1}|||_{s_{l+1}}\leq d^{2^{l+1}},\\
\label{E3-14R2}
&&W^{l+1}\in\textbf{B}_{R,T}^{s^l}.
\end{eqnarray}
We choose a fixed sufficient small $W^0>0$ such that
\begin{eqnarray}\label{E3-16}
|||W^0|||_{s_0}\ll1,~~|||E^0|||_{s_0}\ll1.
\end{eqnarray}
For the case $l=0$, by (\ref{E3-9}), we have
\begin{eqnarray}\label{E3-14RRR}
|||h^{1}|||_{s_{1},T}\leq c_{30}|||E^0|||_{s_{1},T}\leq c_{30}|||E^0|||_{s_{0},T}<1.
\end{eqnarray}
It follows from (\ref{E3-5}), (\ref{E3-7}), (\ref{E3-9}) and (\ref{E3-10}) that
\begin{eqnarray}
\label{E3-15RRR}
|||E^{1}|||_{s_{1},T}&\leq&|||R(h^{1})|||_{s_{1},T}\leq c_{28}\varepsilon^2N_1^2\left(|||h^1|||_{s_1,T}^2(1+|||W^0|||_{s_1,T})+|||h^1|||_{s_1,T}^3\right)\nonumber\\
&\leq& c_{31}\varepsilon^2N_{1}^{2}|||h^{1}|||_{s_{1},T}^2\leq c(2\varepsilon|||E^0|||_{s_{0},T})^2.
\end{eqnarray}
It is obviously to see that (\ref{E3-14RRR})-(\ref{E3-15RRR}) gives $(\ref{E3-14R0})_{l=0}-(\ref{E3-14R1})_{l=0}$ by choosing suitable small $\varepsilon>0$. So we get $W^1\in\textbf{B}_{R,T}^{s_1}$.

Assume that (\ref{E3-14R0})-(\ref{E3-14R2}) holds for $1\leq i\leq l$, i.e.
\begin{eqnarray}\label{E3-17}
&&|||h^{i}|||_{s_{i},T}<1,\\
\label{E3-17R0}
&&|||E^{i}|||_{s_{i}}\leq d^{2^{i}},\\
\label{E3-17R1}
&&W^{i}\in\textbf{B}_{R,T}^{s^i}.
\end{eqnarray}
Now we prove that (\ref{E3-14R0})-(\ref{E3-14R2}) holds for $l+1$.
From (\ref{E3-9}) and (\ref{E3-17R0}), we have
\begin{eqnarray}\label{E3-14}
|||h^{l+1}|||_{s_{l+1},T}\leq c_{31}|||E^l|||_{s_{l},T}<c_{31} d^{2^{i}}<1.
\end{eqnarray}
It follows from  (\ref{E3-5}), (\ref{E3-7}), (\ref{E3-9}), (\ref{E3-10}), (\ref{E3-17R1}) and (\ref{E3-14}) that
\begin{eqnarray}
\label{E3-15}
|||E^{l+1}|||_{s_{l+1},T}&\leq&|||R(h^{l+1})|||_{s_{l+1},T}\nonumber\\
&\leq&c_{28}\varepsilon^2N_{l+1}^2\left(|||h^{l+1}|||_{s_{l+1},T}^2(1+|||W^l|||_{s_{l+1},T})+|||h^{l+1}|||_{s_{l+1},T}^3\right)\nonumber\\
&\leq&c_{32}\varepsilon^2N_{l+1}^{2}|||h^{l+1}|||_{s_{l+1},T}^2\nonumber\\
&\leq&c_{33}\varepsilon^2N_{l+1}^{2}|||E^l|||_{s_{l},T}^2\nonumber\\
&\leq&c_{33}\varepsilon^{2+2^2}N_{l+1}^{2}N_l^{2^2}|||E^{l-1}|||_{s_{l-1},T}^{2^2}\nonumber\\
&\leq&\ldots\nonumber\\
&\leq&c_{34}(16\varepsilon|||E^{0}|||_{s_{0},T})^{2^{l+1}}.
\end{eqnarray}
We can choose a fixed sufficient small $\varepsilon>0$ such that
\begin{eqnarray*}
0<16\varepsilon|||E^{0}|||_{s_{0},T}<1.
\end{eqnarray*}
Thus we conclude that (\ref{E3-14R0})--(\ref{E3-14R1}) holds. Note that $W^l=\sum_{i=0}^lh^i$. So (\ref{E3-14R0}) gives (\ref{E3-14R2}).

Therefore, we derive
\begin{eqnarray*}
\lim_{l\longrightarrow\infty}|||E^l|||_{l,T}=0,
\end{eqnarray*}
which implies that system (\ref{E3-1R1}) with zero initial data has a solution
\begin{eqnarray*}
W_{\infty}=\sum_{i=0}^{\infty}h_i\in\textbf{C}_{T}^{\bar{s}}.
\end{eqnarray*}
At last, by (\ref{E3-1R2}) we obtain the solution of system (\ref{E3-1}) with initial data (\ref{E3-1R0}) has a solution
\begin{eqnarray*}
v^m(t,x)=W^m_{\infty}+v^m_0+v^m_1t.
\end{eqnarray*}
\end{proof}

In what follows, we prove that the uniqueness of solution for system (\ref{E3-1}) with initial data (\ref{E3-1R0}).
Assume that there exists anther solution
\begin{eqnarray}\label{E3-20}
\tilde{v}^m(t,x)=\tilde{W}^m_{\infty}+v^m_0+v^m_1t.
\end{eqnarray}
We intend to prove the following result:
\begin{theorem}
Assume that there exists anther solution (\ref{E3-20}) of system (\ref{E3-1}) with initial data (\ref{E3-1R0}) in $\textbf{B}_{R,T}^{\bar{s}}$.
Then $v^m(t,x)\equiv\tilde{v}^m(t,x)$ holds.
\end{theorem}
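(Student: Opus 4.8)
The plan is to reduce the uniqueness assertion to the linear energy estimate already established in Section~2. Write $z^m:=v^m-\tilde v^m=W^m_\infty-\tilde W^m_\infty$, so that by \eqref{E3-18} and \eqref{E3-20} the function $z^m$ has vanishing initial data, $z^m(0)=\partial_tz^m(0)=0$, and both $v^m,\tilde v^m$ lie in $\textbf B_{R,T}^{\bar s}$. Subtracting the two copies of \eqref{E3-1} gives
\[
\partial_{tt}z^m-\varepsilon^{-1}\partial_a(\epsilon^{ac}\epsilon^{bd}\gamma(u_0)_{cd}\partial_bz^m)=\varepsilon^2\bigl(\mathcal F(v^m)-\mathcal F(\tilde v^m)\bigr).
\]
Since $\mathcal F$ in \eqref{E3-2} is a polynomial of degree at most three in the quantities $\partial_av$ and $\int_0^t\partial_av$, with coefficients depending only on $u_0$, $w$ and $\partial_c(\epsilon^{cd}w^{-\frac12})$, the difference $\mathcal F(v^m)-\mathcal F(\tilde v^m)$ factors as a \emph{linear} expression in $z^m$, $\partial_az^m$ and $\int_0^t\partial_az^m$ whose coefficients are finite sums of products of $\partial_av$, $\partial_a\tilde v$, $\int_0^t\partial_av$, $\int_0^t\partial_a\tilde v$, $\partial u_0$ and $\partial_c(\epsilon^{cd}w^{-\frac12})$. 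Because $v,\tilde v\in\textbf B_{R,T}^{\bar s}$, $u_0\in\textbf H^k$ and each $\int_0^t$ contributes only a factor $T$ without costing a derivative, all these coefficients are bounded in $|||\cdot|||_{\bar s-1,T}$, and each carries at least one factor of $v$ or $\tilde v$, hence is $O(R)$. Consequently $z^m$ solves a linear degenerate hyperbolic problem of exactly the form \eqref{E2-13} with external force $g\equiv0$ and zero initial data.

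The next step is to apply the energy estimate of Theorem~2.1 to this equation. Its principal part is literally the degenerate operator $\mathcal L$ of \eqref{E3-3} (up to the harmless factor $\varepsilon^{-1}$); writing $\epsilon^{ac}\epsilon^{bd}\gamma(u_0)_{cd}=\gamma_0(x)\gamma(x)^{ab}$ as in \eqref{E2-40}, the coercivity and Levi hypotheses \eqref{E2-36}--\eqref{E2-37} hold as before, and they are undisturbed by the lower-order terms of the difference equation, which are $O(\varepsilon)$ with coefficients controlled by $|||v|||_{\bar s,T},|||\tilde v|||_{\bar s,T}\le R$; in particular the Levi condition \eqref{E2-39} persists for $\varepsilon$ small. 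Hence the chain of estimates used to prove Theorem~2.1 (Lemmas~2.3--2.6 together with the reduction \eqref{E2-14}) applies verbatim to $z^m$ and yields, for $1\le s\le\bar s-2$ and $\varepsilon$ small,
\[
|||z|||_{s,T}\le c_3\bigl(|||z(0)|||_{s,T}+|||\partial_tz(0)|||_{s,T}+|||g|||_{s,T}\bigr)=0 .
\]
Here the nonlocal contributions containing $\int_0^t\partial_az$ are absorbed exactly as in Lemma~2.3: one estimates $(\int_0^t\partial_az)^2\le T\int_0^t(\partial_az)^2$, integrates over $[0,T]$, and moves the resulting $\varepsilon T^2|||z|||_{s,T}^2$ onto the left-hand side (equivalently, one closes by Gronwall's inequality in $t$). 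It follows that $z^m\equiv0$ on $[0,T]\times M$, so $W^m_\infty=\tilde W^m_\infty$ and therefore $v^m=W^m_\infty+v^m_0+v^m_1t=\tilde W^m_\infty+v^m_0+v^m_1t=\tilde v^m$, as claimed.

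The only point requiring care is the bookkeeping in the first step: one must reorganize $\mathcal F(v^m)-\mathcal F(\tilde v^m)$ into the divergence-form linear template of \eqref{E2-13}, checking that every memory term $\int_0^t\partial_az$ ends up either inside a spatial divergence $\partial_a(\,O(\varepsilon)\,\partial_bz)$ --- as in the block \eqref{E2-14} --- or as a zeroth-order nonlocal term $f\int_0^t\partial_az$ with $f$ bounded in $|||\cdot|||_{s,T}$. This is the same reshuffling carried out in the proof of Theorem~2.1 through \eqref{E2-14}, now with $h$ replaced by $z$ and with coefficients built from the pair $(v,\tilde v)$ rather than a single function; once it is done, no derivatives are lost beyond the fixed loss already present in Theorem~2.1, and the smallness of $\varepsilon$ supplied by Theorem~3.3 makes every perturbative (and every memory) term absorbable. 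I expect this reorganization, rather than any analytic difficulty, to be the main obstacle. In particular it is essential to use the \emph{exact} polynomial difference $\mathcal F(v^m)-\mathcal F(\tilde v^m)$ and not the Taylor remainder $R$ of \eqref{E3-4}, since the latter is quadratic in $z$ and its estimate in Lemma~3.1 costs two spatial derivatives, which a single application of the linear energy estimate could not recover.
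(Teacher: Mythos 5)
Your argument is sound in substance but takes a genuinely different route from the paper. You exploit the fact that $\mathcal F(v^m)-\mathcal F(\tilde v^m)$ is \emph{exactly linear} in $z=v-\tilde v$, $\partial_az$ and $\int_0^t\partial_az$, with coefficients built from the pair $(v,\tilde v)\in\textbf B_{R,T}^{\bar s}$, so that $z$ solves a linear degenerate problem of the template (\ref{E2-13})--(\ref{E2-14}) with $g\equiv0$ and zero data, and a single application of the energy estimate of Theorem 2.1 forces $z\equiv0$. The paper instead keeps the quadratic Taylor remainder: it views $\bar W_\infty^m=W_\infty^m-\tilde W_\infty^m$ as solving the equation linearized at $\tilde W_\infty^m$ with a forcing $E'$, uses $|||\bar W_\infty^m|||_{s,T}\le c|||E'|||_{s,T}$, bounds $E'$ by $R(\bar W_\infty^m)$ via Lemma 3.1, and obtains the recursion $|||\bar W_\infty^m|||_{s_l,T}\le c\,\varepsilon^2N_l^2|||\bar W_\infty^m|||_{s_{l-1},T}^2$, which it iterates down the Nash--Moser scale $s_l$ of (\ref{E3-11}) to get $|||\bar W_\infty^m|||_{s_l,T}\le c(8\varepsilon|||\bar W_\infty^m|||_{s_0,T})^{2^l}\to0$. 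So your closing objection --- that the quadratic remainder loses derivatives which one application of the linear estimate cannot recover --- is exactly the loss the paper compensates for by iterating along the decreasing scales with the factors $N_l^2$, at the price of the extra smallness condition $8\varepsilon|||\bar W_\infty^m|||_{s_0,T}<1$. Your route buys a simpler one-shot argument whose only smallness requirement on $\varepsilon$ is the one already present in Theorem 2.1 (absorption of the $O(\varepsilon)$ perturbative and memory terms), and whose only real work is the bookkeeping you identify, namely recasting the difference equation in the form used in (\ref{E2-14}); it also treats the $\varepsilon^{-1}$ in the rescaled principal part with the same (admittedly informal) license the paper itself takes when applying Theorem 2.1 to $\mathcal L_\varepsilon$. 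The paper's route stays entirely inside the machinery of Section 3 (Lemma 3.1 and the scheme of Theorem 3.3) but needs the scale iteration and the additional smallness hypothesis that your argument avoids.
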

\begin{proof}
Let
\begin{eqnarray*}
\bar{W}^m_{\infty}=W^m_{\infty}-\tilde{W}^m_{\infty}.
\end{eqnarray*}
We plan to prove that the following initial problem
\begin{eqnarray}\label{E3-21}
&&\partial_{tt}\bar{W}_{\infty}^m-\varepsilon^{-1}\partial_a(\epsilon^{ac}\epsilon^{bd}\gamma(u_0)_{cd}\partial_b\bar{W}_{\infty}^m)-\varepsilon^2(\mathcal{F}(W_{\infty}^m)-\mathcal{F}(\tilde{W}_{\infty}^m))=0,\\
&&\bar{W}_{\infty}^m(0,x)=0,~~\partial_t\bar{W}_{\infty}^m(0,x)=0\nonumber
\end{eqnarray}
has a solution $\bar{W}_{\infty}^m\equiv0$.

Consider the approximation system of (\ref{E3-21}) as
\begin{eqnarray}\label{E3-22}
\mathcal{G}'(\bar{W}_{\infty}^m):=\partial_{tt}\bar{W}_{\infty}^m&-&\varepsilon^{-1}\partial_a(\epsilon^{ac}\epsilon^{bd}\gamma(u_0)_{cd}\partial_b\bar{W}_{\infty}^m)\nonumber\\
&&-\varepsilon^2\Pi_{N_l}(\mathcal{F}(W_{\infty}^m)-\mathcal{F}(\tilde{W}_{\infty}^m))=0.~~~~
\end{eqnarray}
Then using the similar computation process with (\ref{E3-8}), we have
\begin{eqnarray}\label{E3-23}
\mathcal{L}_{\varepsilon}(\bar{W}_{\infty}^m)+E'(t,x)+R(\bar{W}_{\infty}^m)-E'(t,x)=0,
\end{eqnarray}
where $E'(t,x)$ is a function which does not depends on $\bar{W}_{\infty}^m$,
\begin{eqnarray*}
&&\mathcal{L}_{\varepsilon}(\bar{W}_{\infty}^m)=\partial_{tt}\bar{W}_{\infty}^m-\varepsilon^{-1}\partial_a(\epsilon^{ac}\epsilon^{bd}\gamma(u_0)_{cd}\partial_b\bar{W}_{\infty}^m)-\varepsilon^2\Pi_{N_l}\partial_{\tilde{W}_{\infty}^m}\mathcal{F}(\tilde{W}_{\infty}^m)\bar{W}_{\infty}^m,\\
&&R(\bar{W}_{\infty}^m)=\varepsilon^2(\mathcal{F}(W_{\infty}^m)-\mathcal{F}(\tilde{W}_{\infty}^m)-\partial_{\tilde{W}_{\infty}^m}\mathcal{F}(\tilde{W}_{\infty}^m)\bar{W}_{\infty}^m).
\end{eqnarray*}
By Theorem 3 in section 2, there exists a solution $\bar{W}_{\infty}^m$ of
\begin{eqnarray*}
&&\mathcal{L}_{\varepsilon}(\bar{W}_{\infty}^m)+E'(t,x)=0,\\
&&\bar{W}_{\infty}^m(0,x)=0,~~\partial_t\bar{W}_{\infty}^m(0,x)=0.
\end{eqnarray*}
A similar estimate with (\ref{E2-32}) is derived as
\begin{eqnarray*}
|||\bar{W}_{\infty}^m|||_{s,T}\leq c_{35}|||E'|||_{s,T}.
\end{eqnarray*}
Then by (\ref{E3-5}) and (\ref{E3-23}), we have
\begin{eqnarray*}
|||\bar{W}_{\infty}^m|||_{s_l,T}&\leq& c_{35}|||E'|||_{s_l,T}\leq|||R(\bar{W}_{\infty}^m)|||_{s_l,T}\nonumber\\
&\leq&c_{36}\varepsilon^2N_{l}^2(|||\bar{W}_{\infty}^m|||_{s_l,T}^2(1+|||W_{\infty}^m|||_{s_l,T})+|||\bar{W}_{\infty}^m|||_{s_l,T}^3)\nonumber\\
&\leq&c_{37}\varepsilon^2N_{l}^2|||\bar{W}_{\infty}^m|||_{s_{l-1},T}^2\nonumber\\
&\leq&c_{38}\varepsilon^2N_{l}^2N_{l-1}^{2^2}|||\bar{W}_{\infty}^m|||_{s_{l-2},T}^{2^2}\nonumber\\
&\leq&\ldots\nonumber\\
&\leq&c_{39}(8\varepsilon|||\bar{W}_{\infty}^m|||_{s_{0},T})^{2^l}.
\end{eqnarray*}
Choosing a suitable small $\varepsilon$ such that
\begin{eqnarray*}
0<8\varepsilon|||\bar{W}_{\infty}^m|||_{s_{0},T}<1.
\end{eqnarray*}
Thus we obtain
\begin{eqnarray*}
\lim_{l\longrightarrow\infty}|||\bar{W}_{\infty}^m|||_{s_l,T}=0.
\end{eqnarray*}
This completes the proof.
\end{proof}



\begin{thebibliography}{}
\bibitem{Allen}
Allen, P., Andersson, L., Isenberg, J.: Timelike minimal submanifolds of general co-dimension in Minkowski space time. \textit{J. Hyperbolic Differ. Equ.} \textbf{3} (2006) 691--700.

\bibitem{Allen1}
Allen, P., Andersson, L., Restuccia, A.: Local well-posedness for membranes in the light cone gauge. \textit{Comm. Math. Phys.} \textbf{301} (2011) 383--410.

\bibitem{Bel}
Bellettini, G., Hoppe, J., Novaga, M., Orlandi, G.: Closure and convexity results for closed relativistic strings. \textit{Complex Anal. Oper. Theory} \textbf{4} (2010) 473--496.

\bibitem{Brendle}
Brendle, S.: Hypersurfaces in Minkowski space with vanishing mean curvature.
\textit{Comm. Pure. Appl. Math.} \textbf{55} (2002) 1249--1279.

\bibitem{C}
Christodoulou, D.: The Formation of Black Holes in General Relativity,
EMS Monographs in Mathematics, EMS Publishing House 2009.

\bibitem{Colom}
Colombini, F., Spagnolo, S.: An example of a weakly hyperbolic Cauchy problem not well posed in $\textbf{C}^{\infty}$. \textit{Acta Math.} \textbf{148} (1982) 243--253.

\bibitem{Colom1}
Colombini, F., De Giorgi, E., Spagnolo, S.: Sur les \'{e}equations hyperboliques aves des coefficients qui d\'{e}pendent que du temps. \textit{Ann. Scuola Norm. Sup. Pisa.} \textbf{6} (1979) 511--559.

\bibitem{Dirac}
 Dirac, P.A.M.: Lectures on quantum mechanics. Belfer Graduate School of Science Monographs Series, Vol. 2, New York: Belfer Graduate School of Science, 1967, Second printing of the 1964 original

\bibitem{Han0}
Han, Q.: Local solutions to a class of Monge-Ampere equations of mixed type. \textit{Duke. Math. J.} \textbf{136} (2007) 421--473.

\bibitem{Han1}
Han, Q.: Energy estimates for a class of degenerate hyperbolic equations. \textit{Math. Ann.} \textbf{347} (2010) 339--364.

\bibitem{Han2}
Han, Q., Hong, J.X., Lin, C.S.: On the cauchy problem of degenerate hyperbolic equations. \textbf{358} (2006) 4021--4044.

\bibitem{Han2}
Han, Q., Hong, J.X., Lin, C.S.: On the cauchy problem of degenerate hyperbolic equations. \textit{Trans. Amer. Math. Soci.} \textbf{358} (2006) 4021--4044.

\bibitem{Hope}
Hope, J.: Some classical solutions of relativistic membrane equations in $4$-space-time dimensions. \textit{Phys. Lett. B.} \textbf{329} (1994) 10--14.

\bibitem{H}
H\"{o}rmander, L.:  Implicit function theorems. Stanford Lecture notes, University, Stanford 1977

\bibitem{He}
He, C.L., Kong, D.X.: Spherical symmetric solutions for the motion of relativistic membranes in the Schwarzschild spacetime.
\textit{J. Math. Phys} \textbf{50} (2009), 083516.

\bibitem{Huang2}
He, C.L., Huang, S.J.: Harmonic coordinates in the string and membrane equations.
\textit{J. Math. Phys} \textbf{51} (2010), 093510.

\bibitem{Huang1}
Huang, S.J., Kong, D.X.: Equations for the motion of relativistic torus in the Minkowski space $R^{1+n}$.
\textit{J. Math. Phys} \textbf{48} (2007), 083510.

\bibitem{Klainerman1}
Klainerman, S.: Global existence for nonlinear wave equations.
\textit{Comm. Pure Appl. Math.} \textbf{33} (1980) 43--101.

\bibitem{Klainerman2}
Klainerman, S.: Long-time behavior of solutions to nonlinear evolution equations. \textit{Arch. Rational Mech. Anal.}
\textbf{78} (1982) 73-98.

\bibitem{Kong2}
Kong, D.X., Zhang, Q., Zhou, Q.: The dynamics of relativistic strings moving in the Minkowski space $\textbf{R}^{1+n}$. \textit{Commun. Math. Phys.} \textbf{269} (2007) 135--174.

\bibitem{Lindblad}
Lindblad, H.: A remark on global existence for small initial data of the minimal surface equation in Minkowskian space time.
\textit{Proc. Amer. Math. Soc.} \textbf{132} (2003) 1095--1102

\bibitem{Moser}
Moser, J.:  A rapidly converging iteration method and nonlinear partial differential equations I-II. \textit{Ann. Scuola Norm. Sup. Pisa.} \textbf{20} (1966) 265--313, 499--535.

\bibitem{Nash}
Nash, J.: The embedding for Riemannian manifolds. \textit{Amer. Math.} \textbf{63} (1956) 20--63.

\bibitem{Ngu}
Nguyen, L., Tian, G.: On smoothness of timelike maximal cylinders in three dimensional vacuum spacetimes. arXiv:1201.5183v2

\bibitem{N}
Nishitani, T.: On the cauchy problem for weakly hyperbolic equations. \textit{Comm.PDE.} \textbf{3} (1978) 319--333.

\bibitem{Nis}
Nishitani, T.: The Cauchy problem for $D_t^2-D_xa(t,x)D_x$ in Gevrey class of order $s>2$. \textit{Comm.PDE.} \textbf{31} (2006) 1289--1319.

\bibitem{O}
Oleinik, O.A.: On the Cauchy problem for weakly hyperbolic equations. \textit{Comm. Pure. Appl. Math.} vol. XXIII, (1970) 569--586.

\bibitem{PRR}
 N.S. Papageorgiou, V.D. R\u adulescu, D.D. Repov$\check{s}$, Nonlinear second order evolution inclusions with noncoercive viscosity term. 
 {\it J. Differential Equations} \textbf{264} (2018) 4749--4763.

\bibitem{R}
V.D. R\u adulescu, Qualitative analysis of nonlinear elliptic partial differential equations: monotonicity, analytic, and variational methods. Contemporary Mathematics and Its Applications, 6. Hindawi Publishing Corporation, New York, 2008.

\bibitem{Schwartz}
Schwartz, J.T.: Nonlinear Functional Analysis, Gordon and Breach, New York, 1969.

\bibitem{Yan}
W.P. Yan, The motion of closed hypersurfaces in the central force field. {\it J. Differential Equations} \textbf{261} (2016) 1973--2005.



\end{thebibliography}


\end{document}